\documentclass[pdflatex,sn-mathphys-num]{sn-jnl}

\usepackage{graphicx}
\usepackage{amsmath,amssymb,amsfonts}
\usepackage{microtype}
\usepackage{booktabs}
\usepackage{array}
\usepackage{centernot}
\usepackage{enumitem}

\providecommand{\jyear}[1]{}
\jyear{2026}

\newtheorem{theorem}{Theorem}[section]
\newtheorem{proposition}[theorem]{Proposition}
\newtheorem{lemma}[theorem]{Lemma}
\newtheorem{corollary}[theorem]{Corollary}
\newtheorem{definition}[theorem]{Definition}

\theoremstyle{remark}

\numberwithin{equation}{section}

\newcommand{\Syl}{\operatorname{Syl}}

\newcommand{\Rad}{\operatorname{Rad}}

\newcommand{\lcm}{\operatorname{lcm}}

\newcommand{\C}{\mathsf C}

\newcommand{\Sp}{\mathsf{SP}}
\newcommand{\Syls}{\mathcal S}
\newcommand{\gammainv}{\gamma}
\newcommand{\nuu}{\nu}
\newcommand{\sig}{\sigma}

\title[Compensation theorem for the Sylow-integral invariant]{A compensation theorem for the Sylow-integral invariant and counterexamples to an \texorpdfstring{$A_5$}{A5}-characterization conjecture}

\author*[1]{\fnm{Yutong} \sur{Zhang}}\email{yutongzhang@stu.scu.edu.cn}
\author[1]{\fnm{Yaoran} \sur{Yang}}\email{yangyaoran@stu.scu.edu.cn}
\affil[1]{\orgdiv{School of Mathematics}, \orgname{Sichuan University}, \orgaddress{\city{Chengdu}, \postcode{610065}, \country{China}}}

\begin{document}

\abstract{Let \(\nu_p(G)\) be the number of Sylow \(p\)-subgroups of a finite group \(G\), let \(\sigma_p(G)\) be their common order, and set
\[
        \gamma(G)=\int_0^1\sum_{p\in\pi(G)}\nu_p(G)x^{\sigma_p(G)}\,dx
        =\sum_{p\in\pi(G)}\frac{\nu_p(G)}{\sigma_p(G)+1}.
\]
A recent conjectural extension of the simple-group theorem for this invariant asserted that a nonsolvable finite group has \(\gamma(G)=9/2\) precisely when \(G\cong A_5\).  We disprove this assertion by a direct and verifiable construction.  More generally, we prove an exact direct-product compensation formula for \(A_5\) with an arbitrary nilpotent factor.  The formula reduces the equality \(\gamma(A_5\times N)=9/2\) to a finite Egyptian-fraction equation in the orders of the Sylow subgroups of \(N\).  Taking \(N=\C_2\times\C_7\times\C_{11}\times\C_{13}\times\C_{17}\times\C_{19}\times\C_{29}\times\C_{71}\times\C_{83}\), the loss in the \(2\)-Sylow contribution is exactly compensated by the new normal Sylow subgroups.  Consequently \(G=A_5\times N\) is nonsolvable, is not isomorphic to \(A_5\), has solvable radical \(N\), and nevertheless satisfies \(\gamma(G)=9/2\).  Several further explicit compensation certificates are also recorded.}

\keywords{finite groups; Sylow subgroups; Sylow polynomial; nonsolvable groups; alternating group}

\pacs[MSC 2020]{Primary 20D20; Secondary 20D10, 20D60, 20D15}

\maketitle

\section{Introduction}

The present paper concerns a Sylow-counting invariant of finite groups.  For a finite group \(G\) and a prime \(p\), write
\[
        \pi(G)=\{p: p\mid |G|\},\qquad
        \Syls_p(G)=\{P\leq G:P\in\Syl_p(G)\}.
\]
All members of \(\Syls_p(G)\) have the same order.  We use the notation
\[
        \nuu_p(G)=|\Syls_p(G)|,\qquad
        \sig_p(G)=|P|\quad(P\in\Syls_p(G)).
\]
The Sylow polynomial and its integral are
\begin{equation}\label{eq:intro-sp}
        \Sp(G,x)=\sum_{p\in\pi(G)}\nuu_p(G)x^{\sig_p(G)},
        \qquad
        \gammainv(G)=\int_0^1\Sp(G,x)\,dx
        =\sum_{p\in\pi(G)}\frac{\nuu_p(G)}{\sig_p(G)+1}.
\end{equation}
This invariant is close in spirit to the recent finite-group polynomial invariants considered in \cite{AsboeiAnabantiRicerche}.  The simple-group case of \eqref{eq:intro-sp} was treated by Anabanti and Asboei \cite{AnabantiAsboeiSiberian}: if \(G\) is finite noncyclic simple, then \(\gamma(G)=9/2\) if and only if \(G\cong A_5\).  A public 2024 conjecture attributed to Asboei proposed the same characterization with ``finite nonsolvable group'' in place of ``finite noncyclic simple group'' \cite{AnabantiSlides}.  The conjectural assertion is therefore
\begin{equation}\label{eq:conjectural-statement}
        G\ \,\text{finite nonsolvable},\quad
        \gammainv(G)=\frac92
        \quad\Longleftrightarrow\quad
        G\cong A_5.
\end{equation}
Our purpose is to settle \eqref{eq:conjectural-statement} negatively, and to do so in a way that explains the exact obstruction to extending the simple-group result.

The mechanism is unexpectedly rigid.  If \(N\) is nilpotent and \(N_p\) denotes its Sylow \(p\)-subgroup, then the direct-product rule proved below gives
\begin{equation}\label{eq:intro-formula}
\begin{aligned}
        \gammainv(A_5\times N)
        &=\frac{5}{4|N_2|+1}+\frac{10}{3|N_3|+1}+\frac{6}{5|N_5|+1}
          +\sum_{q\in\pi(N)\setminus\{2,3,5\}}\frac{1}{|N_q|+1},
\end{aligned}
\end{equation}
where \(|N_p|=1\) when \(p\notin\pi(N)\).  Thus the old primes \(2,3,5\) decrease the three \(A_5\)-terms, while new primes add positive terms.  Equality with \(9/2\) is therefore a balancing problem, not a simplicity phenomenon.

The smallest old-prime defect is obtained by adjoining a central \(2\)-group of order \(2\):
\begin{equation}\label{eq:defect-c2}
        \frac{5}{4+1}-\frac{5}{4\cdot 2+1}=1-\frac59=\frac49.
\end{equation}
The following elementary identity supplies exactly this amount from new normal Sylow subgroups:
\begin{equation}\label{eq:key-identity-intro}
        \frac18+\frac1{12}+\frac1{14}+\frac1{18}+\frac1{20}+\frac1{30}+\frac1{72}+\frac1{84}=\frac49.
\end{equation}
Since
\[
\begin{gathered}
        8=7+1,\quad 12=11+1,\quad 14=13+1,\quad 18=17+1,\\
        20=19+1,\quad 30=29+1,\quad 72=71+1,\quad 84=83+1.
\end{gathered}
\]
\eqref{eq:defect-c2} and \eqref{eq:key-identity-intro} immediately suggest a counterexample.

\begin{theorem}\label{thm:main-counterexample}
Let
\begin{equation}\label{eq:m0-def}
        m_0=7\cdot 11\cdot 13\cdot 17\cdot 19\cdot 29\cdot 71\cdot 83
        =55254930731
\end{equation}
and let
\begin{equation}\label{eq:g0-def}
        G_0=A_5\times \C_2\times \C_7\times \C_{11}\times \C_{13}\times \C_{17}\times \C_{19}\times \C_{29}\times \C_{71}\times \C_{83}.
\end{equation}
Then \(G_0\) is nonsolvable, \(G_0\not\cong A_5\), and
\begin{equation}\label{eq:g0-gamma}
        \gammainv(G_0)=\frac92.
\end{equation}
Consequently the nonsolvable-group assertion \eqref{eq:conjectural-statement} is false.
\end{theorem}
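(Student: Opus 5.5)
The plan is to compute \(\gamma(G_0)\) directly from the structure of Sylow subgroups of a direct product \(A_5\times N\) with \(N\) nilpotent, and then verify the two qualitative claims separately. Write \(N=\C_2\times\C_7\times\C_{11}\times\C_{13}\times\C_{17}\times\C_{19}\times\C_{29}\times\C_{71}\times\C_{83}\). Since \(N\) is abelian, hence nilpotent, it has a unique (normal) Sylow \(q\)-subgroup \(N_q\) for every \(q\), and \(|N_q|\) is \(2\) for \(q=2\), equal to \(q\) for each of the eight odd primes dividing \(m_0\), and \(1\) otherwise.

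The first key step is the Sylow-counting rule: for \(G=H\times N\) with \(N\) nilpotent, every Sylow \(p\)-subgroup of \(G\) has the form \(P\times N_p\) with \(P\in\Syl_p(H)\). Indeed, such a product is a \(p\)-subgroup of the correct order \(|H|_p|N|_p\), so it is Sylow. Conversely, if \(S\) is a Sylow \(p\)-subgroup of \(G\), then \(SN\) contains the normal Sylow \(p\)-subgroup \(N_p\) of \(N\), and projecting to \(H\) shows that \(S\le \pi_H(S)\times N_p\). Order comparison then forces equality. Hence \(\nu_p(H\times N)=\nu_p(H)\) and \(\sigma_p(H\times N)=\sigma_p(H)|N_p|\). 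For primes \(p\notin\pi(H)\) this gives \(\nu_p=1\) and \(\sigma_p=|N_p|\). Using the classical data for \(A_5\), namely \(\nu_2=5,\sigma_2=4\), \(\nu_3=10,\sigma_3=3\), and \(\nu_5=6,\sigma_5=5\), we obtain formula \eqref{eq:intro-formula}.

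Next we specialize to \(N\) as above. The \(3\)- and \(5\)-terms remain \(10/4+6/6=7/2\). The \(2\)-term becomes \(5/9\) instead of \(1\). The new primes contribute \(\sum 1/(q+1)\) over \(q\in\{7,11,13,17,19,29,71,83\}\), which is the left side of \eqref{eq:key-identity-intro}. Thus
\[
\gamma(G_0)=\frac59+\frac52+1+\frac49=\frac92,
\]
provided identity \eqref{eq:key-identity-intro} holds. I expect verifying that identity to be the only computational point, and it is routine. The denominators have lcm \(2^3\cdot3^2\cdot5\cdot7\cdot11\cdot13\cdot17\cdot19\cdot29\cdot71\cdot83\)-ish. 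It is easiest to sum in stages: \(1/8+1/72=10/72=5/36\); \(1/12+1/84=8/84=2/21\); \(1/20+1/30=1/12\); \(1/14+1/18=16/126=8/63\). Then \(5/36+1/12=8/36=2/9\), and \(2/21+8/63=14/63=2/9\), so the total is \(4/9\). One should also check that each listed number is prime and that \(m_0\) equals the stated product, though the latter value is only decorative.

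Finally, \(G_0\) is nonsolvable because it contains a subgroup (indeed a quotient) isomorphic to the nonabelian simple group \(A_5\), and subgroups of solvable groups are solvable. Moreover \(|G_0|=60\cdot2\cdot m_0>60\), so \(G_0\not\cong A_5\). Together with \(\gamma(G_0)=9/2\), this contradicts the forward implication of \eqref{eq:conjectural-statement}. The main obstacle is therefore not conceptual; it is the care needed in the product rule for Sylow subgroups, specifically the converse inclusion. That inclusion is where nilpotency of \(N\) (normality of \(N_p\)) is used.
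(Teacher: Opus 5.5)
Your proposal is correct and follows essentially the paper's route: the direct-product Sylow rule via projections (the paper proves it for arbitrary direct products, so nilpotency is needed only to get \(\nu_p(N)=1\), not for the converse inclusion), the classical \(A_5\) data, and the identity \eqref{eq:key-identity-intro}, which your pairwise grouping verifies correctly. One harmless slip: the lcm of \(8,12,14,18,20,30,72,84\) is \(2520\), not a number involving the primes \(11,\dots,83\), but your staged computation does not depend on it.
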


The proof is not a reduction to a known theorem.  It is an exact computation using the behavior of Sylow numbers under products.  The Sylow polynomial of \(G_0\) is
\begin{equation}\label{eq:g0-polynomial}
\begin{aligned}
        \Sp(G_0,x)
        &=5x^8+10x^3+6x^5+x^7+x^{11}+x^{13}+x^{17}+x^{19}+x^{29}+x^{71}+x^{83},
\end{aligned}
\end{equation}
so that
\begin{equation}\label{eq:g0-integral-intro}
\begin{aligned}
        \gammainv(G_0)
        &=\frac59+\frac{10}{4}+\frac66
          +\frac18+\frac1{12}+\frac1{14}+\frac1{18}+\frac1{20}+\frac1{30}+\frac1{72}+\frac1{84} \\
        &=\frac59+\frac52+1+\frac49
         =\frac92.
\end{aligned}
\end{equation}
The remaining sections prove this computation in a structural form and record several consequences.

\section{Sylow numbers under direct products}

The direct-product formula is elementary, but it is the point at which the conjectural extension from simple groups to nonsolvable groups breaks down.  We give the complete statement because the invariant \(\gamma\) is neither additive nor multiplicative.

\begin{definition}\label{def:neutral-convention}
Let \(p\) be any prime.  If \(p\notin\pi(G)\), we use the neutral conventions
\begin{equation}\label{eq:neutral}
        \Syls_p(G)=\{1\},\qquad \nuu_p(G)=1,
        \qquad \sig_p(G)=1.
\end{equation}
When \(p\in\pi(G)\), the same symbols have their ordinary Sylow meanings.
\end{definition}

The convention \eqref{eq:neutral} is not meant to add a spurious term to \(\gamma(G)\).  It is only a compact way of writing product formulas.  Sums defining \(\gamma(G)\) are always taken over \(\pi(G)\).

\begin{lemma}[Direct-product rule]\label{lem:direct-product}
Let \(G_1,\ldots,G_r\) be finite groups and put
\[
        G=G_1\times\cdots\times G_r.
\]
For every prime \(p\),
\begin{equation}\label{eq:sylow-product-set}
        \Syls_p(G)=\{P_1\times\cdots\times P_r:
        P_i\in\Syls_p(G_i)\text{ for every }i\}.
\end{equation}
Consequently, with the neutral convention \eqref{eq:neutral},
\begin{equation}\label{eq:product-nu-sigma}
        \nuu_p(G)=\prod_{i=1}^r\nuu_p(G_i),
        \qquad
        \sig_p(G)=\prod_{i=1}^r\sig_p(G_i).
\end{equation}
Thus
\begin{equation}\label{eq:gamma-product-general}
        \gammainv(G_1\times\cdots\times G_r)
        =\sum_{p\in\pi(G_1)\cup\cdots\cup\pi(G_r)}
        \frac{\prod_{i=1}^r\nuu_p(G_i)}{1+\prod_{i=1}^r\sig_p(G_i)}.
\end{equation}
\end{lemma}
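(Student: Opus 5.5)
The plan is to prove the set equality \eqref{eq:sylow-product-set} first. The two formulas in \eqref{eq:product-nu-sigma} and the integral formula \eqref{eq:gamma-product-general} then follow by counting and substitution.

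\emph{Easy inclusion.} Fix a prime \(p\) and choose \(P_i\in\Syls_p(G_i)\) for each \(i\). Under the neutral convention, \(P_i=1\) when \(p\nmid|G_i|\). The product \(P_1\times\cdots\times P_r\) is a \(p\)-subgroup of \(G\) of order \(\prod_i|P_i|\). This is exactly the \(p\)-part of \(|G|=\prod_i|G_i|\), so the product is a Sylow \(p\)-subgroup of \(G\).

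\emph{Reverse inclusion.} Let \(P\in\Syls_p(G)\) and let \(\pi_i\colon G\to G_i\) be the coordinate projections. Each image \(\pi_i(P)\) is a \(p\)-subgroup of \(G_i\), and \(P\leq \pi_1(P)\times\cdots\times\pi_r(P)\). The right-hand side is a \(p\)-group containing a Sylow \(p\)-subgroup, so by maximality of order the inclusion is an equality. Comparing orders then forces each \(|\pi_i(P)|\) to equal the \(p\)-part of \(|G_i|\). Hence \(\pi_i(P)\in\Syls_p(G_i)\), which gives the required decomposition.

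\emph{Counting and substitution.} Distinct tuples \((P_1,\ldots,P_r)\) give distinct direct products, since each factor is recovered by projection. Therefore \(\nuu_p(G)=\prod_i\nuu_p(G_i)\). The order formula \(\sig_p(G)=\prod_i\sig_p(G_i)\) is immediate from the product of orders. The neutral convention makes the primes not dividing \(|G_i|\) contribute the factor \(1\) in both formulas. Finally, \(\pi(G)=\bigcup_i\pi(G_i)\), and substituting these values into the definition \eqref{eq:intro-sp} of \(\gammainv\) gives \eqref{eq:gamma-product-general}.

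The only step needing any care is the reverse inclusion. The key point there is that \(\prod_i\pi_i(P)\) is itself a \(p\)-group containing \(P\), so Sylow maximality applies. I do not expect a genuine obstacle.
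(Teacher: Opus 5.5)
Your proposal is correct and follows essentially the same route as the paper: a product of Sylow subgroups is Sylow by an order count, and conversely a Sylow \(P\) lies in \(\operatorname{pr}_1(P)\times\cdots\times\operatorname{pr}_r(P)\), which is a \(p\)-group, so order maximality forces equality and forces each projection to be Sylow. Your remark that distinct tuples give distinct products, because each factor is recovered by projection, makes explicit a point the paper leaves implicit in ``counting the products.''
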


\begin{proof}
Write \(|G_i|=p^{a_i}m_i\), where \(p\nmid m_i\).  A subgroup
\[
        P=P_1\times\cdots\times P_r\leq G_1\times\cdots\times G_r
\]
with \(|P_i|=p^{a_i}\) has order \(p^{a_1+\cdots+a_r}\), which is the largest power of \(p\) dividing \(|G|\).  Hence every such product is Sylow in \(G\).  Conversely, if \(P\in\Syls_p(G)\), then the projection \(\operatorname{pr}_i(P)\) is a \(p\)-subgroup of \(G_i\), and
\[
        |P|\leq \prod_{i=1}^r |\operatorname{pr}_i(P)|\leq p^{a_1+\cdots+a_r}=|P|.
\]
Both inequalities are equalities.  Hence \(|\operatorname{pr}_i(P)|=p^{a_i}\) for every \(i\), so \(\operatorname{pr}_i(P)\in\Syls_p(G_i)\), and equality in the first inequality forces
\[
        P=\operatorname{pr}_1(P)\times\cdots\times \operatorname{pr}_r(P).
\]
This proves \eqref{eq:sylow-product-set}.  Counting the products gives \eqref{eq:product-nu-sigma}; integrating the polynomial term by term gives \eqref{eq:gamma-product-general}.
\end{proof}

For later use we isolate the nilpotent case.  Let \(N\) be finite nilpotent.  For each prime \(p\), let \(N_p\) denote the Sylow \(p\)-subgroup of \(N\), where \(N_p=1\) when \(p\notin\pi(N)\).  Since Sylow subgroups of a nilpotent group are normal and unique,
\begin{equation}\label{eq:nilpotent-nu-sigma}
        \nuu_p(N)=1,
        \qquad
        \sig_p(N)=|N_p|.
\end{equation}

\begin{corollary}[Nilpotent product formula]\label{cor:nilpotent-product}
Let \(H\) be a finite group and let \(N\) be finite nilpotent.  Then
\begin{equation}\label{eq:HN-gamma}
        \gammainv(H\times N)
        =\sum_{p\in\pi(H)\cup\pi(N)}
        \frac{\nuu_p(H)}{\sig_p(H)|N_p|+1},
\end{equation}
where \(\nuu_p(H)=\sig_p(H)=1\) if \(p\notin\pi(H)\).
\end{corollary}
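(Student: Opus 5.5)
The plan is to specialize Lemma~\ref{lem:direct-product} to the case \(r=2\) with \(G_1=H\) and \(G_2=N\), and then insert the nilpotent data \eqref{eq:nilpotent-nu-sigma}. Formula \eqref{eq:gamma-product-general} with two factors gives
\[
        \gammainv(H\times N)=\sum_{p\in\pi(H)\cup\pi(N)}\frac{\nuu_p(H)\,\nuu_p(N)}{1+\sig_p(H)\,\sig_p(N)},
\]
where the neutral convention \eqref{eq:neutral} is used for primes not dividing one of the factors. Since \(\pi(H\times N)=\pi(H)\cup\pi(N)\), this index set is exactly the one over which \(\gammainv\) is defined, so no spurious terms appear.

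Next I will substitute the values for \(N\). For a prime \(p\in\pi(N)\), the Sylow \(p\)-subgroup \(N_p\) is normal and therefore unique, so \(\nuu_p(N)=1\) and \(\sig_p(N)=|N_p|\). For a prime \(p\notin\pi(N)\), the convention \eqref{eq:neutral} gives \(\nuu_p(N)=1\) and \(\sig_p(N)=1=|N_p|\), because \(N_p=1\) by definition. In both cases \eqref{eq:nilpotent-nu-sigma} holds, and each summand becomes \(\nuu_p(H)/(\sig_p(H)|N_p|+1)\). This is \eqref{eq:HN-gamma}. For \(p\notin\pi(H)\), the stated convention \(\nuu_p(H)=\sig_p(H)=1\) is the same neutral convention, so the summand is \(1/(|N_p|+1)\), as the formula requires.

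There is no real obstacle here. The only step needing care is the bookkeeping check that the neutral conventions for \(H\) and for \(N\) agree with those assumed in Lemma~\ref{lem:direct-product}. In particular, when \(p\) divides only one factor, the product formula \eqref{eq:product-nu-sigma} must still return the genuine Sylow number and Sylow order of \(H\times N\). This holds because the missing factor contributes the trivial subgroup \(\{1\}\), which is its unique Sylow \(p\)-subgroup of order \(1\).
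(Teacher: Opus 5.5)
Your proposal is correct and is the paper's own argument: the paper proves this corollary by applying Lemma \ref{lem:direct-product} with \(r=2\) and substituting the nilpotent data \eqref{eq:nilpotent-nu-sigma}. Your extra check that the neutral conventions match for primes dividing only one factor spells out what that one-line proof leaves implicit.
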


\begin{proof}
Apply Lemma \ref{lem:direct-product} with \(r=2\) and use \eqref{eq:nilpotent-nu-sigma}.
\end{proof}

A useful way to read \eqref{eq:HN-gamma} is to separate old and new primes.  If \(p\in\pi(H)\cap\pi(N)\), then the old contribution
\[
        \frac{\nuu_p(H)}{\sig_p(H)+1}
\]
is replaced by
\[
        \frac{\nuu_p(H)}{\sig_p(H)|N_p|+1}.
\]
If \(q\in\pi(N)\setminus\pi(H)\), then a new contribution
\[
        \frac{1}{|N_q|+1}
\]
is added.  Hence
\begin{equation}\label{eq:old-new-decomposition}
\begin{aligned}
        \gammainv(H\times N)-\gammainv(H)
        &=\sum_{p\in\pi(H)\cap\pi(N)}
          \nuu_p(H)\left(\frac{1}{\sig_p(H)|N_p|+1}-\frac{1}{\sig_p(H)+1}\right) \\
        &\quad +\sum_{q\in\pi(N)\setminus\pi(H)}\frac{1}{|N_q|+1}.
\end{aligned}
\end{equation}
For \(p\in\pi(H)\cap\pi(N)\), the bracket in \eqref{eq:old-new-decomposition} is negative unless \(|N_p|=1\).  New primes give positive summands.  The direct-product obstruction is therefore a genuine signed compensation phenomenon.

\section{The \texorpdfstring{$A_5$}{A5} block}

We now specialize the preceding formula to \(H=A_5\).  The Sylow data are classical, but we include the short count to make the counterexample independent of tables.

\begin{lemma}[Sylow data for \(A_5\)]\label{lem:A5-data}
For \(A_5\),
\begin{equation}\label{eq:A5-data}
\begin{gathered}
        (\nuu_2(A_5),\sig_2(A_5))=(5,4),\qquad
        (\nuu_3(A_5),\sig_3(A_5))=(10,3),\\
        (\nuu_5(A_5),\sig_5(A_5))=(6,5).
\end{gathered}
\end{equation}
Consequently
\begin{equation}\label{eq:gamma-A5}
        \gammainv(A_5)=\frac{5}{4+1}+\frac{10}{3+1}+\frac{6}{5+1}
        =1+\frac52+1=\frac92.
\end{equation}
\end{lemma}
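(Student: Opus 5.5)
The plan is to compute the three Sylow numbers of \(A_5\) directly from its element structure, using only \(|A_5|=60=2^2\cdot 3\cdot 5\), the Sylow theorems, and a count of elements by cycle type. Since \(\sig_p(A_5)\) is the full \(p\)-part of \(60\), the orders are immediate: \(\sig_2=4\), \(\sig_3=3\), \(\sig_5=5\). The only work is in the \(\nuu_p\).

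For \(p=5\) and \(p=3\), the Sylow subgroups have prime order, so distinct ones meet trivially. Each Sylow \(5\)-subgroup contains exactly four elements of order \(5\), and \(A_5\) contains \(4!=24\) five-cycles. Hence \(\nuu_5=24/4=6\). Similarly, each Sylow \(3\)-subgroup contains two elements of order \(3\), and \(A_5\) has \(\binom53\cdot 2=20\) three-cycles, so \(\nuu_3=10\).

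For \(p=2\), I would argue as follows. The even permutations of order \(2\) are the \(15\) double transpositions. For each point \(i\in\{1,\dots,5\}\), the four-group \(V_i\) consists of the identity and the three double transpositions fixing \(i\). This is a subgroup of order \(4\), hence Sylow. The five groups \(V_1,\dots,V_5\) are distinct, since each double transposition fixes exactly one point. This gives \(\nuu_2\ge 5\).

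To show there are no others, note that a Sylow \(2\)-subgroup of order \(4\) contains no element of order \(4\), because \(A_5\) has no \(4\)-cycles. So it is a Klein four-group made of three commuting double transpositions. Two commuting double transpositions in \(A_5\) must have the same fixed point: if the fixed points differed, the product would be a \(3\)-cycle rather than a double transposition. Hence every Sylow \(2\)-subgroup equals some \(V_i\), and \(\nuu_2=5\). Alternatively, Sylow's theorem forces \(\nuu_2\in\{1,3,5,15\}\). The value \(1\) is impossible since \(A_5\) is simple, and \(\nuu_2\) cannot exceed \(15/3=5\) because distinct Sylow subgroups here share no involution (each involution lies in exactly one \(V_i\)).

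Formula \eqref{eq:gamma-A5} then follows by substituting these data into \eqref{eq:intro-sp}. The step needing the most care is the \(p=2\) count, because Sylow \(2\)-subgroups need not intersect trivially in general. The fixed-point argument above settles this cleanly.
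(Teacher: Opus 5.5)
Your approach matches the paper's: you count $3$-cycles and $5$-cycles to get $\nu_3=20/2$ and $\nu_5=24/4$. For $p=2$ you exhibit $V_1,\dots,V_5$ and show that every Sylow $2$-subgroup is a Klein group of pairwise commuting double transpositions, which must share a fixed point.

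The gap is in your justification of that last step. It is false that the product of two double transpositions with different fixed points is a $3$-cycle. For example, $(12)(34)$ fixes $5$ and $(13)(25)$ fixes $4$, yet their product is a $5$-cycle. What you actually need is that such a product is never an involution, or equivalently that two double transpositions with different fixed points never commute. That is true, but your proposal does not prove it. The paper handles it by listing the double transpositions that commute with $(12)(34)$ and observing that they all fix $5$. A one-line repair in your own terms: if $y$ commutes with a double transposition $x$ whose unique fixed point is $a$, then $x(y(a))=y(x(a))=y(a)$. So $y(a)$ is fixed by $x$, hence $y(a)=a$. Since a double transposition has exactly one fixed point, $a$ is also the fixed point of $y$.

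Your ``alternative'' bound $\nu_2\le 15/3$ depends on the same fact. The parenthetical ``each involution lies in exactly one $V_i$'' concerns only the five subgroups you have already found. It does not show that two arbitrary distinct Sylow $2$-subgroups share no involution. For that you need the following: any Sylow $2$-subgroup containing an involution $x$ consists of double transpositions commuting with $x$. By the fixed-point argument above, it therefore equals the $V_i$ containing $x$. At that point the alternative is no longer independent of your main argument. With this repair your $p=2$ count is complete. The rest of the proposal, including the final substitution giving $\gamma(A_5)=9/2$, is fine.
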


\begin{proof}
The order of \(A_5\) is \(60=2^2\cdot3\cdot5\).  The Sylow \(2\)-subgroups have order \(4\).  For each point \(i\in\{1,2,3,4,5\}\), the three double transpositions on the four points different from \(i\), together with the identity, form a Klein four subgroup; denote it by \(V_i\).  These five subgroups are Sylow \(2\)-subgroups.  Conversely, let \(P\in\Syl_2(A_5)\).  Since \(A_5\) has no element of order \(4\), the group \(P\) is a Klein four group, and its three nonidentity elements are double transpositions.  If \(x\) is one of them and, after relabelling, \(x=(12)(34)\), then the double transpositions in \(A_5\) commuting with \(x\) are precisely
\[
        (12)(34),\qquad (13)(24),\qquad (14)(23),
\]
all of which fix the same point, namely \(5\).  Since the nonidentity elements of \(P\) commute pairwise, they all fix the same point.  Hence \(P=V_i\) for some \(i\), and there are exactly five Sylow \(2\)-subgroups.  Thus
\[
        \nuu_2(A_5)=5,
        \qquad
        \sig_2(A_5)=4.
\]
There are \(20\) three-cycles in \(A_5\), and each subgroup of order \(3\) contains two nonidentity three-cycles; hence
\[
        \nuu_3(A_5)=\frac{20}{2}=10,
        \qquad
        \sig_3(A_5)=3.
\]
There are \(24\) five-cycles in \(A_5\), and each subgroup of order \(5\) contains four nonidentity five-cycles; hence
\[
        \nuu_5(A_5)=\frac{24}{4}=6,
        \qquad
        \sig_5(A_5)=5.
\]
Substitution into \eqref{eq:intro-sp} gives \eqref{eq:gamma-A5}.
\end{proof}

\begin{theorem}[Compensation formula for \(A_5\times N\)]\label{thm:A5-compensation}
Let \(N\) be a finite nilpotent group.  For every prime \(p\), put \(d_p=|N_p|\), with \(d_p=1\) if \(p\notin\pi(N)\).  Then
\begin{equation}\label{eq:A5N-formula}
        \gammainv(A_5\times N)
        =\frac{5}{4d_2+1}+\frac{10}{3d_3+1}+\frac{6}{5d_5+1}
        +\sum_{q\in\pi(N)\setminus\{2,3,5\}}\frac{1}{d_q+1}.
\end{equation}
Equivalently,
\begin{equation}\label{eq:A5N-difference}
\begin{aligned}
        \gammainv(A_5\times N)-\frac92
        &=\left(\frac{5}{4d_2+1}-1\right)
          +\left(\frac{10}{3d_3+1}-\frac52\right)
          +\left(\frac{6}{5d_5+1}-1\right) \\
        &\quad +\sum_{q\in\pi(N)\setminus\{2,3,5\}}\frac{1}{d_q+1}.
\end{aligned}
\end{equation}
Therefore
\begin{equation}\label{eq:compensation-equation}
        \gammainv(A_5\times N)=\frac92
\end{equation}
if and only if
\begin{equation}\label{eq:compensation-eq-expanded}
\begin{aligned}
        \sum_{q\in\pi(N)\setminus\{2,3,5\}}\frac{1}{d_q+1}
        &=\left(1-\frac{5}{4d_2+1}\right)
          +\left(\frac52-\frac{10}{3d_3+1}\right)
          +\left(1-\frac{6}{5d_5+1}\right).
\end{aligned}
\end{equation}
\end{theorem}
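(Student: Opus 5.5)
The plan is to apply Corollary \ref{cor:nilpotent-product} with \(H=A_5\) and substitute the Sylow data from Lemma \ref{lem:A5-data}. Since \(\pi(A_5)=\{2,3,5\}\), the index set in \eqref{eq:HN-gamma} is \(\{2,3,5\}\cup\pi(N)\), and I will split it into the old primes \(2,3,5\) and the new primes \(q\in\pi(N)\setminus\{2,3,5\}\).

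For an old prime \(p\in\{2,3,5\}\), the summand in \eqref{eq:HN-gamma} is \(\nuu_p(A_5)/(\sig_p(A_5)d_p+1)\). By \eqref{eq:A5-data} this gives the three terms
\[
\frac{5}{4d_2+1},\qquad \frac{10}{3d_3+1},\qquad \frac{6}{5d_5+1}.
\]
This holds whether or not \(p\) divides \(|N|\), because \(d_p=1\) when \(p\notin\pi(N)\), in which case \(N_p=1\) and the term reduces to the original \(A_5\)-term. For a new prime \(q\), the neutral convention \eqref{eq:neutral} gives \(\nuu_q(A_5)=\sig_q(A_5)=1\), so the summand is \(1/(d_q+1)\). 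Adding these contributions yields \eqref{eq:A5N-formula}.

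To get \eqref{eq:A5N-difference}, I will subtract \(\gammainv(A_5)=9/2\) in the split form \(1+\tfrac52+1\) from \eqref{eq:gamma-A5}, pairing each piece with the corresponding old-prime term. The equivalence of \eqref{eq:compensation-equation} and \eqref{eq:compensation-eq-expanded} then follows by setting the difference to zero and moving the three brackets to the right-hand side with their signs reversed.

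There is no real obstacle here. The only point needing care is bookkeeping: the sum over \(\pi(A_5)\cup\pi(N)\) must be matched exactly with the three fixed terms plus the new-prime sum, and no spurious term may be introduced for primes outside \(\pi(A_5\times N)\). The neutral convention handles this, since it is used only inside the product formula, while the summation index set is \(\pi(A_5\times N)\).
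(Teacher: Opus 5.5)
Your proposal is correct and follows the paper's proof: apply Corollary~\ref{cor:nilpotent-product} with \(H=A_5\) and the data of Lemma~\ref{lem:A5-data}, subtract \(\gamma(A_5)=1+\tfrac52+1\) termwise, and rearrange. Your bookkeeping remarks (that \(d_p=1\) recovers the original \(A_5\)-term, and that new primes contribute \(1/(d_q+1)\) by the neutral convention) simply make explicit what the paper leaves implicit.
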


\begin{proof}
Equation \eqref{eq:A5N-formula} follows by applying Corollary \ref{cor:nilpotent-product} to the Sylow data \eqref{eq:A5-data}.  Subtracting \eqref{eq:gamma-A5} gives \eqref{eq:A5N-difference}.  Moving the three nonpositive old-prime terms to the other side gives \eqref{eq:compensation-eq-expanded}.
\end{proof}

The exact sign pattern in \eqref{eq:A5N-difference} will be used repeatedly.  Since \(d_2,d_3,d_5\geq1\),
\begin{equation}\label{eq:old-prime-inequalities}
\begin{gathered}
        \frac{5}{4d_2+1}\leq 1,
        \qquad
        \frac{10}{3d_3+1}\leq \frac52,
        \qquad
        \frac{6}{5d_5+1}\leq 1,
\end{gathered}
\end{equation}
with equality in the respective inequality precisely when \(d_2=1\), \(d_3=1\), \(d_5=1\).  Hence old primes lower the value and new primes raise it.

\begin{corollary}[No one-sided perturbation]\label{cor:no-one-sided}
Let \(N\) be nontrivial nilpotent.
\begin{enumerate}[label=\textup{(\roman*)}]
\item If \(\pi(N)\subseteq\{2,3,5\}\), then
\begin{equation}\label{eq:old-only-less}
        \gammainv(A_5\times N)<\frac92.
\end{equation}
\item If \(\pi(N)\cap\{2,3,5\}=\varnothing\), then
\begin{equation}\label{eq:new-only-greater}
        \gammainv(A_5\times N)>\frac92.
\end{equation}
\end{enumerate}
Thus equality for \(A_5\times N\) with \(N\neq1\) requires both an old-prime defect and at least one new-prime contribution.
\end{corollary}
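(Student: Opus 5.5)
The plan is to read both parts directly off the difference formula \eqref{eq:A5N-difference} of Theorem \ref{thm:A5-compensation}, using the sign information \eqref{eq:old-prime-inequalities}. We write \eqref{eq:A5N-difference} as a sum of three old-prime brackets and a new-prime sum. Each old-prime bracket is nonpositive, and it is strictly negative exactly when the corresponding \(d_p>1\). Every term \(1/(d_q+1)\) of the new-prime sum is strictly positive.

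For (i), suppose \(\pi(N)\subseteq\{2,3,5\}\). Then the new-prime sum over \(\pi(N)\setminus\{2,3,5\}\) is empty and equals \(0\). Since \(N\neq1\), some prime \(p\in\{2,3,5\}\) divides \(|N|\), so \(d_p=|N_p|>1\). By the equality clause in \eqref{eq:old-prime-inequalities}, the corresponding bracket is then strictly negative. For instance, \(5/(4d_2+1)<1\) exactly when \(4d_2+1>5\), i.e.\ \(d_2>1\), and the analogous equivalences hold for \(p=3\) and \(p=5\). The other two brackets are \(\le 0\), so the whole difference is negative, which gives \eqref{eq:old-only-less}.

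For (ii), suppose \(\pi(N)\cap\{2,3,5\}=\varnothing\). Then \(d_2=d_3=d_5=1\), and all three brackets vanish. Because \(N\neq1\), the set \(\pi(N)\setminus\{2,3,5\}=\pi(N)\) is nonempty, so the new-prime sum is a nonempty sum of positive terms and is strictly positive. This gives \eqref{eq:new-only-greater}.

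The concluding sentence then follows by contraposition. If \(N\neq1\) and \(\gamma(A_5\times N)=9/2\), then (i) forces \(\pi(N)\not\subseteq\{2,3,5\}\), so at least one new prime contributes. Likewise (ii) forces \(\pi(N)\cap\{2,3,5\}\ne\varnothing\), so some \(d_p>1\) with \(p\in\{2,3,5\}\), which is an old-prime defect.

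There is no real obstacle here. The only point needing care is the strictness in (i): we must use nontriviality of \(N\) to obtain one \(d_p>1\), and then verify the strict form of the relevant inequality in \eqref{eq:old-prime-inequalities}. This amounts to the three elementary equivalences noted above.
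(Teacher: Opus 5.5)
Your proposal is correct and follows the paper's proof: both parts are read off from \eqref{eq:A5N-difference}. In (i) the new-prime sum is empty and $N\neq1$ forces some $d_p>1$ with $p\in\{2,3,5\}$, giving a strictly negative bracket; in (ii) all brackets vanish and the new-prime sum is nonempty and positive. Your explicit contrapositive derivation of the concluding sentence is a small addition, since the paper leaves that step implicit.
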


\begin{proof}
For (i), the last sum in \eqref{eq:A5N-difference} is empty and at least one of \(d_2,d_3,d_5\) is greater than \(1\), so at least one old-prime bracket is negative while the others are nonpositive.  For (ii), all old-prime brackets vanish and the last sum is nonempty and positive.
\end{proof}

The smallest possible nonzero old-prime defect is produced by the Sylow \(2\)-part of order \(2\).  More explicitly,
\begin{equation}\label{eq:defects-general}
\begin{aligned}
        \Delta_2(a)&=1-\frac{5}{4\cdot2^a+1}        &&(a\geq1),\\
        \Delta_3(b)&=\frac52-\frac{10}{3\cdot3^b+1} &&(b\geq1),\\
        \Delta_5(c)&=1-\frac{6}{5\cdot5^c+1}        &&(c\geq1).
\end{aligned}
\end{equation}
Then
\begin{equation}\label{eq:min-defects}
        \Delta_2(1)=\frac49,
        \qquad
        \Delta_3(1)=\frac32,
        \qquad
        \Delta_5(1)=\frac{10}{13},
\end{equation}
and
\begin{equation}\label{eq:defect-comparison}
        \frac49<\frac{10}{13}<\frac32.
\end{equation}
This explains why the construction below first introduces a central factor \(\C_2\): it creates the least possible old-prime loss, and the loss is exactly \(4/9\).

\section{Nilpotent direct-product layers and the exact Diophantine condition}

The compensation formula has a particularly transparent form when the nilpotent factor is written by Sylow orders.  Let
\begin{equation}\label{eq:generic-N}
        N=P_2\times P_3\times P_5\times\prod_{q\in Q}P_q,
\end{equation}
where
\begin{equation}\label{eq:P-orders}
        |P_2|=2^a,
        \qquad |P_3|=3^b,
        \qquad |P_5|=5^c,
        \qquad |P_q|=q^{e_q}\quad(q\in Q),
\end{equation}
with \(a,b,c\geq0\), \(e_q\geq1\), and \(Q\) a finite set of primes disjoint from \(\{2,3,5\}\).  The symbol \(P_p=1\) is allowed when the corresponding exponent is \(0\).

\begin{theorem}[Prime-power compensation criterion]\label{thm:prime-power-criterion}
With \(N\) as in \eqref{eq:generic-N} and \eqref{eq:P-orders},
\begin{equation}\label{eq:prime-power-gamma}
\begin{aligned}
        \gammainv(A_5\times N)
        &=\frac{5}{2^{a+2}+1}
          +\frac{10}{3^{b+1}+1}
          +\frac{6}{5^{c+1}+1}
          +\sum_{q\in Q}\frac{1}{q^{e_q}+1}.
\end{aligned}
\end{equation}
Moreover,
\begin{equation}\label{eq:prime-power-equality}
        \gammainv(A_5\times N)=\frac92
\end{equation}
if and only if
\begin{equation}\label{eq:prime-power-diophantine}
\begin{aligned}
        \sum_{q\in Q}\frac{1}{q^{e_q}+1}
        &=\frac92-\frac{5}{2^{a+2}+1}
          -\frac{10}{3^{b+1}+1}
          -\frac{6}{5^{c+1}+1}.
\end{aligned}
\end{equation}
\end{theorem}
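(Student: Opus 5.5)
This is a direct specialization of Theorem \ref{thm:A5-compensation}, so the plan is to substitute the Sylow orders of \(N\) and then rearrange. First, \(N\) as in \eqref{eq:generic-N} is a direct product of groups of prime-power order, so it is nilpotent. Its Sylow \(p\)-subgroup is exactly \(P_p\), with \(P_p=1\) for primes outside \(\{2,3,5\}\cup Q\). Hence, in the notation of Theorem \ref{thm:A5-compensation},
\[
d_2=2^a,\qquad d_3=3^b,\qquad d_5=5^c,\qquad d_q=q^{e_q}\quad(q\in Q).
\]
We also have \(\pi(N)\setminus\{2,3,5\}=Q\), because each \(e_q\geq1\) and \(Q\) is disjoint from \(\{2,3,5\}\).

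Next, substitute these values into \eqref{eq:A5N-formula}. The three old-prime denominators become
\[
4\cdot2^a+1=2^{a+2}+1,\qquad 3\cdot3^b+1=3^{b+1}+1,\qquad 5\cdot5^c+1=5^{c+1}+1.
\]
The new-prime sum runs over \(Q\) with terms \(1/(q^{e_q}+1)\). This gives \eqref{eq:prime-power-gamma}. These formulas remain valid when an exponent among \(a,b,c\) is \(0\): the neutral convention \eqref{eq:neutral} gives \(d_p=1\), and the terms reduce to \(5/5=1\), \(10/4=5/2\) and \(6/6=1\), matching \eqref{eq:gamma-A5}.

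Finally, the equivalence \eqref{eq:prime-power-equality}\(\iff\)\eqref{eq:prime-power-diophantine} follows by setting \eqref{eq:prime-power-gamma} equal to \(9/2\) and moving the three old-prime terms to the right-hand side. Equivalently, it is \eqref{eq:compensation-eq-expanded} after the same substitution, with the constants \(1+\tfrac52+1=\tfrac92\) combined.

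There is no real obstacle. The only point that needs care is the bookkeeping identification of \(\pi(N)\setminus\{2,3,5\}\) with \(Q\) and of each \(d_p\) with \(|P_p|\), including the degenerate cases where an exponent is zero. This rests on the uniqueness of the Sylow subgroups of a nilpotent group, as recorded in \eqref{eq:nilpotent-nu-sigma}.
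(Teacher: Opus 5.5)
Your proposal is correct and is the paper's proof: identify \(d_2=2^a\), \(d_3=3^b\), \(d_5=5^c\), \(d_q=q^{e_q}\), substitute into \eqref{eq:A5N-formula}, and move the three old-prime terms across to obtain the equivalence. Your extra bookkeeping (\(\pi(N)\setminus\{2,3,5\}=Q\) and the zero-exponent cases) only makes explicit what the paper leaves implicit.
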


\begin{proof}
The Sylow \(2\)-subgroup of \(N\) has order \(2^a\), the Sylow \(3\)-subgroup has order \(3^b\), the Sylow \(5\)-subgroup has order \(5^c\), and the Sylow \(q\)-subgroup has order \(q^{e_q}\).  Substitute these orders into \eqref{eq:A5N-formula}.  The equivalence \eqref{eq:prime-power-equality}--\eqref{eq:prime-power-diophantine} is then immediate.
\end{proof}

The case used in the counterexample is
\begin{equation}\label{eq:a1bc0}
        a=1,
        \qquad b=0,
        \qquad c=0,
        \qquad e_q=1\quad(q\in Q).
\end{equation}
Then \eqref{eq:prime-power-diophantine} reduces to the single finite Egyptian equation
\begin{equation}\label{eq:eq-4-9}
        \sum_{q\in Q}\frac{1}{q+1}=\frac49.
\end{equation}
Indeed, with \eqref{eq:a1bc0},
\begin{equation}\label{eq:RHS-a1}
        \frac{9}{2}-\frac{5}{2^{3}+1}-\frac{10}{3+1}-\frac{6}{5+1}
        =\frac{9}{2}-\frac{5}{9}-\frac{5}{2}-1
        =\frac49.
\end{equation}
Thus every solution \(Q\) of \eqref{eq:eq-4-9} yields a counterexample
\begin{equation}\label{eq:general-GQ}
        G_Q=A_5\times \C_2\times \prod_{q\in Q}\C_q
\end{equation}
provided \(Q\cap\{2,3,5\}=\varnothing\).

\begin{corollary}[Exact squarefree criterion]\label{cor:squarefree-criterion}
Let \(Q\) be a finite set of primes with \(Q\cap\{2,3,5\}=\varnothing\), and let
\begin{equation}\label{eq:EQ-def}
        E_Q=\prod_{q\in Q}\C_q.
\end{equation}
Then
\begin{equation}\label{eq:squarefree-formula}
        \gammainv(A_5\times\C_2\times E_Q)
        =\frac{73}{18}+\sum_{q\in Q}\frac{1}{q+1}.
\end{equation}
Consequently
\begin{equation}\label{eq:squarefree-iff}
        \gammainv(A_5\times\C_2\times E_Q)=\frac92
        \quad\Longleftrightarrow\quad
        \sum_{q\in Q}\frac{1}{q+1}=\frac49.
\end{equation}
\end{corollary}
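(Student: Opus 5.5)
The plan is to specialize Theorem \ref{thm:prime-power-criterion} to the nilpotent group \(N=\C_2\times E_Q\). First I will check that \(N\) has the form \eqref{eq:generic-N}. Since \(E_Q\) is a direct product of cyclic groups of prime order, it is abelian and hence nilpotent, and so is \(N\). Its Sylow subgroups are \(P_2=\C_2\) and \(P_q=\C_q\) for \(q\in Q\), with \(P_3=P_5=1\), because \(Q\cap\{2,3,5\}=\varnothing\). In the notation \eqref{eq:P-orders} this gives exactly the parameter choice \eqref{eq:a1bc0}: \(a=1\), \(b=c=0\), and \(e_q=1\) for all \(q\in Q\).

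Next I will substitute these values into \eqref{eq:prime-power-gamma}. The three old-prime terms become
\[
\frac{5}{2^{3}+1}+\frac{10}{3+1}+\frac{6}{5+1}=\frac59+\frac52+1 .
\]
Over the common denominator \(18\) this is \((10+45+18)/18=73/18\). The new-prime terms are \(\sum_{q\in Q}1/(q+1)\). Adding them yields \eqref{eq:squarefree-formula}.

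For the equivalence \eqref{eq:squarefree-iff}, I will subtract \(73/18\) from \(9/2=81/18\), which leaves \(8/18=4/9\). This agrees with the computation \eqref{eq:RHS-a1}, so \eqref{eq:squarefree-iff} is exactly the specialization of \eqref{eq:prime-power-diophantine}. One could equally derive it from Corollary \ref{cor:nilpotent-product} together with Lemma \ref{lem:A5-data}.

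No step is a real obstacle. The only point needing care is the hypothesis \(Q\cap\{2,3,5\}=\varnothing\). It guarantees that no \(q\in Q\) enlarges the Sylow \(3\)- or \(5\)-subgroups, and that the \(2\)-part of \(N\) has order exactly \(2\). Without it the old-prime terms would change and the constant \(73/18\) would no longer be correct.
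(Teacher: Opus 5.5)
Your proposal is correct and follows the paper's proof essentially verbatim: you specialize \eqref{eq:prime-power-gamma} to $a=1$, $b=c=0$, $e_q=1$, compute $\frac59+\frac52+1=\frac{73}{18}$, and subtract from $\frac92=\frac{81}{18}$ to get $\frac49$. Your explicit check that $\C_2\times E_Q$ has the form \eqref{eq:generic-N}, which uses $Q\cap\{2,3,5\}=\varnothing$, is a welcome detail that the paper leaves implicit.
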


\begin{proof}
Equation \eqref{eq:squarefree-formula} is \eqref{eq:prime-power-gamma} with \eqref{eq:a1bc0}; explicitly,
\[
        \frac{5}{2^{3}+1}+\frac{10}{3+1}+\frac{6}{5+1}
        =\frac59+\frac52+1
        =\frac{10+45+18}{18}
        =\frac{73}{18}.
\]
Since \(9/2=81/18\), equality with \(9/2\) is equivalent to the condition
\[
        \sum_{q\in Q}\frac{1}{q+1}=\frac{81}{18}-\frac{73}{18}=\frac{8}{18}=\frac49.
\]
\end{proof}

The criterion \eqref{eq:squarefree-iff} is exact within this natural central squarefree layer.  It also makes clear why a counterexample cannot be excluded by inspecting \(A_5\) alone: the value \(9/2\) can be restored after a central perturbation by solving a rational identity independent of the nonabelian composition factor.

\section{The compensation identity}

We now verify the arithmetic identity behind Theorem \ref{thm:main-counterexample}.  The proof is included because the counterexample depends on an exact rational equality, not on numerical approximation.

\begin{lemma}[The main Egyptian certificate]\label{lem:main-identity}
One has
\begin{equation}\label{eq:main-identity}
        \frac18+\frac1{12}+\frac1{14}+\frac1{18}+\frac1{20}+\frac1{30}+\frac1{72}+\frac1{84}=\frac49.
\end{equation}
\end{lemma}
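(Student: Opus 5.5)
The plan is to verify the identity by clearing denominators over a single common multiple; everything reduces to finite integer arithmetic. First I will factor the eight denominators:
\[
8=2^3,\quad 12=2^2\cdot3,\quad 14=2\cdot7,\quad 18=2\cdot3^2,\quad 20=2^2\cdot5,\quad 30=2\cdot3\cdot5,\quad 72=2^3\cdot3^2,\quad 84=2^2\cdot3\cdot7 .
\]
Taking the largest prime powers gives the least common multiple $L=2^3\cdot3^2\cdot5\cdot7=2520$. Since $9\mid 2520$, the target $4/9$ can also be written over $L$.

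Next I will rewrite each unit fraction as $(L/n)/L$. The numerators $L/n$ are, in order,
\[
315,\quad 210,\quad 180,\quad 140,\quad 126,\quad 84,\quad 35,\quad 30 .
\]
Adding them gives $315+210+180+140+126+84+35+30=1120$. On the other side, $\tfrac49=\tfrac{4\cdot280}{2520}=\tfrac{1120}{2520}$. Comparing the two numerators over the common denominator $L$ then proves \eqref{eq:main-identity}.

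As a cross-check less prone to arithmetic slips, I would also regroup the terms into pairs:
\[
\tfrac18+\tfrac1{72}=\tfrac5{36},\qquad
\tfrac1{12}+\tfrac1{84}=\tfrac2{21},\qquad
\tfrac1{14}+\tfrac1{18}=\tfrac8{63},\qquad
\tfrac1{20}+\tfrac1{30}=\tfrac1{12}.
\]
I would then confirm that these four partial sums add to $4/9$ over the denominator $252$, which gives $35+24+32+21=112$ and $112/252=4/9$.

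There is no conceptual obstacle here. The only risk is an arithmetic error in the eight quotients $L/n$ or in their sum, and the independent pairwise grouping is meant to catch exactly that. Combined with Corollary \ref{cor:squarefree-criterion}, applied to $Q=\{7,11,13,17,19,29,71,83\}$, this identity yields Theorem \ref{thm:main-counterexample}.
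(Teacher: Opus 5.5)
Your proposal is correct and follows the paper's own proof: both clear denominators over $\operatorname{lcm}=2520$ and check that the numerators $315,210,180,140,126,84,35,30$ sum to $1120=\tfrac49\cdot2520$. Your pairwise regrouping over $252$ is a valid extra cross-check that the paper omits (it only reduces $1120/2520$ to $4/9$ directly).
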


\begin{proof}
The least common multiple of the denominators in \eqref{eq:main-identity} is
\begin{equation}\label{eq:lcm2520}
        \lcm(8,12,14,18,20,30,72,84)=2^3\cdot3^2\cdot5\cdot7=2520.
\end{equation}
Therefore the left-hand side of \eqref{eq:main-identity} equals
\begin{equation}\label{eq:numerator2520}
\begin{aligned}
        \frac{315+210+180+140+126+84+35+30}{2520}
        &=\frac{1120}{2520}
         =\frac{112}{252}
         =\frac{28}{63}
         =\frac49.
\end{aligned}
\end{equation}
\end{proof}

Set
\begin{equation}\label{eq:Q0-def}
        Q_0=\{7,11,13,17,19,29,71,83\}.
\end{equation}
Then
\begin{equation}\label{eq:Q0-denominators}
        \{q+1:q\in Q_0\}=\{8,12,14,18,20,30,72,84\},
\end{equation}
and Lemma \ref{lem:main-identity} is precisely
\begin{equation}\label{eq:Q0-sum}
        \sum_{q\in Q_0}\frac{1}{q+1}=\frac49.
\end{equation}
Combining Corollary \ref{cor:squarefree-criterion} with \eqref{eq:Q0-sum} already proves the equality \(\gamma(G_0)=9/2\).  We give the expanded Sylow-polynomial computation in the next section to make the disproof completely explicit.

The same mechanism is not isolated.  The following identities give further squarefree central layers.
\begin{proposition}[Additional explicit certificates]\label{prop:more-certificates}
Each of the following prime sets satisfies \eqref{eq:eq-4-9}:
\begin{equation}\label{eq:four-Qs}
\begin{aligned}
        Q_1&=\{7,11,13,17,19,29,71,83\},\\
        Q_2&=\{7,11,13,17,19,23,83,179\},\\
        Q_3&=\{7,11,13,17,19,29,41,503\},\\
        Q_4&=\{7,11,13,17,19,23,59,1259\}.
\end{aligned}
\end{equation}
Consequently every group
\begin{equation}\label{eq:GQi}
        A_5\times\C_2\times\prod_{q\in Q_i}\C_q
        \qquad(1\leq i\leq4)
\end{equation}
is a nonsolvable group, not isomorphic to \(A_5\), with Sylow-integral value \(9/2\).
\end{proposition}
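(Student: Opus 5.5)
The plan is to reduce everything to Corollary \ref{cor:squarefree-criterion} and Lemma \ref{lem:main-identity}, so that only rational arithmetic and a few elementary group-theoretic remarks remain. For each \(i\) three things must be checked: every element of \(Q_i\) is a prime outside \(\{2,3,5\}\); the Egyptian equation \eqref{eq:eq-4-9} holds for \(Q_i\); and the group in \eqref{eq:GQi} is nonsolvable and not isomorphic to \(A_5\).

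\emph{Primality.} The small members \(7,11,13,17,19,23,29,41,59,71,83\) are standard primes. For \(179\), \(503\) and \(1259\) I would use trial division by the primes up to \(\sqrt{n}\), that is, up to \(13\), \(19\) and \(31\) respectively. For example, \(1259\) leaves remainders \(6,5,11,1,5,17,12,19\) on division by \(7,11,13,17,19,23,29,31\). None of the primes involved lies in \(\{2,3,5\}\), so Corollary \ref{cor:squarefree-criterion} applies to each \(Q_i\).

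\emph{The Egyptian identities.} Rather than compute four separate sums over common denominators, I would chain each certificate to Lemma \ref{lem:main-identity} by local substitutions.
\begin{itemize}[label=\textup{--}]
\item \(Q_1=Q_0\), so \(Q_1\) is exactly Lemma \ref{lem:main-identity}.
\item \(Q_2\) arises from \(Q_1\) by replacing \(\{29,71\}\) with \(\{23,179\}\). This is valid because
\[
\frac1{30}+\frac1{72}=\frac{17}{360}=\frac1{24}+\frac1{180}.
\]
\item \(Q_3\) arises from \(Q_1\) by replacing \(\{71,83\}\) with \(\{41,503\}\). This is valid because
\[
\frac1{72}+\frac1{84}=\frac{13}{504}=\frac1{42}+\frac1{504}.
\]
\item \(Q_4\) arises from \(Q_2\) by replacing \(\{83,179\}\) with \(\{59,1259\}\). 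This is valid because
\[
\frac1{84}+\frac1{180}=\frac{22}{1260}=\frac1{60}+\frac1{1260}.
\]
\end{itemize}
Each swap keeps the elements distinct primes. Each of the three two-term identities is a single common-denominator check, so all four \(Q_i\) satisfy \eqref{eq:eq-4-9}. By \eqref{eq:squarefree-iff}, each group in \eqref{eq:GQi} then has \(\gamma=9/2\).

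\emph{Group-theoretic conclusions.} Each group \(G\) in \eqref{eq:GQi} contains \(A_5\) as a subgroup (equivalently, has it as a quotient). Since \(A_5\) is nonsolvable, \(G\) is nonsolvable. Moreover \(|G|=60\cdot 2\cdot\prod_{q\in Q_i}q>60\), so \(G\not\cong A_5\).

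The only step I expect to need care is the arithmetic bookkeeping: the primality of the three large primes and the exactness of the swap identities. I would present the swap identities explicitly, as above, so that each is a one-line verifiable certificate.
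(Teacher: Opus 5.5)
Your proof is correct. Its overall reduction is the same as the paper's: verify the Egyptian equation \eqref{eq:eq-4-9} and then invoke Corollary~\ref{cor:squarefree-criterion}. The difference is in how the three new identities are certified.

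\emph{The paper's route.} It checks each of \(Q_2,Q_3,Q_4\) independently. It writes the full eight-term sum over the common denominator \(2520\) and observes that the numerators add to \(1120\).

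\emph{Your route.} You derive \(Q_2\) and \(Q_3\) from \(Q_1\), and \(Q_4\) from \(Q_2\), using two-term exchange identities:
\[
\tfrac1{30}+\tfrac1{72}=\tfrac1{24}+\tfrac1{180},\qquad
\tfrac1{72}+\tfrac1{84}=\tfrac1{42}+\tfrac1{504},\qquad
\tfrac1{84}+\tfrac1{180}=\tfrac1{60}+\tfrac1{1260}.
\]
All three are correct. In the paper's numerators they show up as the equal pair sums \(84+35=105+14\), \(35+30=60+5\) and \(30+14=42+2\).

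\emph{What each buys.} Your version needs only three small checks. It also shows how the certificates arise from one another by local exchanges, which indicates how further solutions could be generated. The paper's version makes each certificate self-contained, so no row depends on the correctness of another. It also feeds directly into the numerator-partition presentation in the later section on exact arithmetic of the certificates.

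You also supply two points that the paper's proof of this proposition leaves implicit. First, you check that \(179\), \(503\) and \(1259\) are prime, and your trial-division remainders are correct. Second, you argue explicitly that the groups are nonsolvable and not isomorphic to \(A_5\); the paper establishes this for general \(Q\) only later, in Theorem~\ref{thm:certificate}.
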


\begin{proof}
For \(Q_1\), the assertion is Lemma \ref{lem:main-identity}.  The other three identities are verified over the same denominator \(2520\):
\begin{equation}\label{eq:Q2-identity}
\begin{aligned}
        \sum_{q\in Q_2}\frac1{q+1}
        &=\frac18+\frac1{12}+\frac1{14}+\frac1{18}+\frac1{20}+\frac1{24}+\frac1{84}+\frac1{180}\\
        &=\frac{315+210+180+140+126+105+30+14}{2520}
         =\frac{1120}{2520}=\frac49,
\end{aligned}
\end{equation}
\begin{equation}\label{eq:Q3-identity}
\begin{aligned}
        \sum_{q\in Q_3}\frac1{q+1}
        &=\frac18+\frac1{12}+\frac1{14}+\frac1{18}+\frac1{20}+\frac1{30}+\frac1{42}+\frac1{504}\\
        &=\frac{315+210+180+140+126+84+60+5}{2520}
         =\frac{1120}{2520}=\frac49,
\end{aligned}
\end{equation}
\begin{equation}\label{eq:Q4-identity}
\begin{aligned}
        \sum_{q\in Q_4}\frac1{q+1}
        &=\frac18+\frac1{12}+\frac1{14}+\frac1{18}+\frac1{20}+\frac1{24}+\frac1{60}+\frac1{1260}\\
        &=\frac{315+210+180+140+126+105+42+2}{2520}
         =\frac{1120}{2520}=\frac49.
\end{aligned}
\end{equation}
Corollary \ref{cor:squarefree-criterion} then applies to each \(Q_i\).
\end{proof}

The prime sets in Proposition \ref{prop:more-certificates} are included only as explicit certificates.  The structural statement is Corollary \ref{cor:squarefree-criterion}: any solution of \eqref{eq:eq-4-9} produces a counterexample, and the verification of such a solution is purely rational.

\section{Proof of the main counterexample}

We now give the proof of Theorem \ref{thm:main-counterexample} in full detail.

Let
\begin{equation}\label{eq:N0-def}
        N_0=\C_2\times\C_7\times\C_{11}\times\C_{13}\times\C_{17}\times\C_{19}\times\C_{29}\times\C_{71}\times\C_{83}.
\end{equation}
Then \(N_0\) is cyclic, because the displayed factors have pairwise coprime orders, and
\begin{equation}\label{eq:N0-order}
        |N_0|=2m_0=110509861462.
\end{equation}
Let
\begin{equation}\label{eq:G0again}
        G_0=A_5\times N_0.
\end{equation}
The order is
\begin{equation}\label{eq:G0-order}
        |G_0|=|A_5||N_0|=60\cdot110509861462=6630591687720,
\end{equation}
so \(G_0\not\cong A_5\).  Since \(A_5\) is nonsolvable and is a quotient of \(G_0\), the group \(G_0\) is nonsolvable.  More precisely,
\begin{equation}\label{eq:radical}
        \Rad(G_0)=N_0,
        \qquad
        G_0/\Rad(G_0)\cong A_5,
\end{equation}
and
\begin{equation}\label{eq:center-commutator}
        Z(G_0)=N_0,
        \qquad
        [G_0,G_0]=A_5\times1.
\end{equation}
Thus the counterexample is a split central abelian extension of \(A_5\).

We compute the Sylow data prime by prime.  For \(p=2\), Lemma \ref{lem:direct-product} gives
\begin{equation}\label{eq:G0-2data}
        \nuu_2(G_0)=\nuu_2(A_5)\nuu_2(N_0)=5\cdot1=5,
        \qquad
        \sig_2(G_0)=\sig_2(A_5)\sig_2(N_0)=4\cdot2=8.
\end{equation}
For \(p=3\), because \(3\nmid |N_0|\),
\begin{equation}\label{eq:G0-3data}
        \nuu_3(G_0)=10,
        \qquad
        \sig_3(G_0)=3.
\end{equation}
For \(p=5\), because \(5\nmid |N_0|\),
\begin{equation}\label{eq:G0-5data}
        \nuu_5(G_0)=6,
        \qquad
        \sig_5(G_0)=5.
\end{equation}
For each
\begin{equation}\label{eq:q-in-Q0}
        q\in Q_0=\{7,11,13,17,19,29,71,83\},
\end{equation}
the Sylow \(q\)-subgroup of \(G_0\) is the unique central factor \(\C_q\), so
\begin{equation}\label{eq:G0-qdata}
        \nuu_q(G_0)=1,
        \qquad
        \sig_q(G_0)=q.
\end{equation}
There are no other primes in \(\pi(G_0)\).  Therefore
\begin{equation}\label{eq:G0-SP-expanded}
\begin{aligned}
        \Sp(G_0,x)
        &=\nuu_2(G_0)x^{\sig_2(G_0)}+\nuu_3(G_0)x^{\sig_3(G_0)}+\nuu_5(G_0)x^{\sig_5(G_0)}
          +\sum_{q\in Q_0}\nuu_q(G_0)x^{\sig_q(G_0)}\\
        &=5x^8+10x^3+6x^5+x^7+x^{11}+x^{13}+x^{17}+x^{19}+x^{29}+x^{71}+x^{83}.
\end{aligned}
\end{equation}
Integrating \eqref{eq:G0-SP-expanded} over \([0,1]\) gives
\begin{equation}\label{eq:G0-gamma-expanded}
\begin{aligned}
        \gammainv(G_0)
        &=\frac{5}{8+1}+\frac{10}{3+1}+\frac{6}{5+1}
          +\frac{1}{7+1}+\frac{1}{11+1}+\frac{1}{13+1}+\frac{1}{17+1} \\
        &\quad +\frac{1}{19+1}+\frac{1}{29+1}+\frac{1}{71+1}+\frac{1}{83+1} \\
        &=\frac59+\frac52+1
          +\frac18+\frac1{12}+\frac1{14}+\frac1{18}+\frac1{20}+\frac1{30}+\frac1{72}+\frac1{84}.
\end{aligned}
\end{equation}
By Lemma \ref{lem:main-identity}, the final eight terms in \eqref{eq:G0-gamma-expanded} sum to \(4/9\).  Hence
\begin{equation}\label{eq:G0-final-computation}
        \gammainv(G_0)
        =\frac59+\frac49+\frac52+1
        =1+\frac52+1
        =\frac92.
\end{equation}
This proves Theorem \ref{thm:main-counterexample}.

\section{Why the simple-group theorem does not extend}

The result of Anabanti and Asboei \cite{AnabantiAsboeiSiberian} is a theorem about finite noncyclic simple groups.  The direct-product calculation above shows that the simple-group hypothesis is not a technical artifact: it prevents nilpotent direct-product compensation.

Let \(N\) be nilpotent.  The quotient
\begin{equation}\label{eq:quotient-A5}
        A_5\times N\longrightarrow A_5,
        \qquad
        (a,n)\longmapsto a
\end{equation}
has solvable kernel \(N\).  The Sylow-integral value, however, is not determined by the quotient.  From \eqref{eq:A5N-difference},
\begin{equation}\label{eq:not-quotient-invariant}
\begin{aligned}
        \gammainv(A_5\times N)-\gammainv(A_5)
        &=\left(\frac{5}{4|N_2|+1}-1\right)
          +\left(\frac{10}{3|N_3|+1}-\frac52\right) \\
        &\quad +\left(\frac{6}{5|N_5|+1}-1\right)
          +\sum_{q\in\pi(N)\setminus\{2,3,5\}}\frac{1}{|N_q|+1}.
\end{aligned}
\end{equation}
The right-hand side can be negative, positive, or zero.  For example,
\begin{equation}\label{eq:three-signs}
\begin{aligned}
        \gammainv(A_5\times\C_2)-\gammainv(A_5)&=-\frac49,\\
        \gammainv(A_5\times\C_7)-\gammainv(A_5)&=\frac18,\\
        \gammainv(A_5\times N_0)-\gammainv(A_5)&=0.
\end{aligned}
\end{equation}
Thus the solvable radical can change the value in either direction, and can also change it by zero while changing the group.

The central nature of the example is also important.  For \(G_0=A_5\times N_0\),
\begin{equation}\label{eq:G0-central-sequence}
        1\longrightarrow N_0\longrightarrow G_0\longrightarrow A_5\longrightarrow1
\end{equation}
is split and central.  Therefore the failure of \eqref{eq:conjectural-statement} occurs before one encounters semidirect-product complications.  It is not caused by a nontrivial action of \(A_5\) on an abelian normal subgroup; it is caused by the denominator \(\sig_p(G)+1\) in \eqref{eq:intro-sp}.  Multiplying a Sylow \(p\)-subgroup by a central \(p\)-subgroup enlarges \(\sig_p(G)\), thereby reducing the old term, while a new central Sylow \(q\)-subgroup contributes a term \((q+1)^{-1}\).

This observation gives a precise obstruction to possible corrected statements.  Any correct nonsolvable-group theorem for the value \(9/2\) must impose an additional restriction that excludes at least the equality solutions of \eqref{eq:compensation-eq-expanded}.  For instance, a theorem restricted to finite noncyclic simple groups is compatible with \cite{AnabantiAsboeiSiberian}; a theorem restricted merely to nonsolvable groups is not.

\section{Rigidity inside the central compensation family}

Although the conjectural nonsolvable characterization is false, the counterexample is not arbitrary.  Within the family \(A_5\times N\) with \(N\) nilpotent, the equality condition is exactly the compensation equation \eqref{eq:compensation-eq-expanded}.  We record two consequences that are useful when comparing possible repairs of the conjecture.

\begin{proposition}[Old-prime and new-prime necessity]\label{prop:necessity}
Let \(N\) be finite nilpotent and nontrivial.  If
\begin{equation}\label{eq:equality-N}
        \gammainv(A_5\times N)=\frac92,
\end{equation}
then
\begin{equation}\label{eq:both-prime-conditions}
        \pi(N)\cap\{2,3,5\}\neq\varnothing,
        \qquad
        \pi(N)\setminus\{2,3,5\}\neq\varnothing.
\end{equation}
Moreover, if \(\pi(N)\cap\{2,3,5\}=\{2\}\) and \(|N_2|=2\), then equality is equivalent to
\begin{equation}\label{eq:simple-equivalent}
        \sum_{q\in\pi(N)\setminus\{2,3,5\}}\frac1{|N_q|+1}=\frac49.
\end{equation}
\end{proposition}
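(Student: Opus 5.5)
The plan is to derive both parts of Proposition~\ref{prop:necessity} directly from Theorem~\ref{thm:A5-compensation} and Corollary~\ref{cor:no-one-sided}; no new group theory is needed.

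For the first assertion, assume \(N\neq1\) and \(\gamma(A_5\times N)=9/2\), and argue by contraposition on the two conditions in \eqref{eq:both-prime-conditions}. Suppose first that \(\pi(N)\cap\{2,3,5\}=\varnothing\). Then Corollary~\ref{cor:no-one-sided}(ii) gives \(\gamma(A_5\times N)>9/2\), a contradiction. Suppose instead that \(\pi(N)\setminus\{2,3,5\}=\varnothing\). Then \(\pi(N)\subseteq\{2,3,5\}\), and since \(N\) is nontrivial, Corollary~\ref{cor:no-one-sided}(i) gives \(\gamma(A_5\times N)<9/2\), again a contradiction. Both sets in \eqref{eq:both-prime-conditions} are therefore nonempty.

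For the second assertion, specialize the equivalence of \eqref{eq:compensation-equation} with \eqref{eq:compensation-eq-expanded}. The hypothesis \(\pi(N)\cap\{2,3,5\}=\{2\}\) means \(d_3=d_5=1\). By the equality cases in \eqref{eq:old-prime-inequalities}, the \(3\)- and \(5\)-brackets on the right of \eqref{eq:compensation-eq-expanded} then vanish. With \(d_2=|N_2|=2\), the remaining bracket is \(\Delta_2(1)=1-5/9=4/9\), as recorded in \eqref{eq:min-defects}. The right-hand side of \eqref{eq:compensation-eq-expanded} is thus exactly \(4/9\). Since \(d_q=|N_q|\) for each \(q\in\pi(N)\setminus\{2,3,5\}\), the equivalence becomes \eqref{eq:simple-equivalent}.

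No step here is a real obstacle. The only points needing care are checking that the hypotheses of Corollary~\ref{cor:no-one-sided} are met: nontriviality of \(N\) is used in case (i), and nonemptiness of \(\pi(N)\) in case (ii), which again follows from \(N\neq1\). One should also confirm that the convention \(d_p=1\) for \(p\notin\pi(N)\) is what makes the \(3\)- and \(5\)-brackets vanish.
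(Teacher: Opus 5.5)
Your proposal is correct and matches the paper's proof: the first assertion is read off from Corollary~\ref{cor:no-one-sided}, and the second comes from substituting \(d_2=2\), \(d_3=d_5=1\) into \eqref{eq:compensation-eq-expanded}. That substitution makes the \(3\)- and \(5\)-brackets vanish and reduces the right-hand side to \(1-5/9=4/9\).
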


\begin{proof}
The first assertion is Corollary \ref{cor:no-one-sided}.  For the second, put \(d_2=2\) and \(d_3=d_5=1\) in \eqref{eq:compensation-eq-expanded}.  The right-hand side becomes
\[
        1-\frac{5}{4\cdot2+1}=\frac49,
\]
and the terms involving \(d_3\) and \(d_5\) vanish.
\end{proof}

\begin{proposition}[Monotonicity in fixed prime support]\label{prop:monotonicity}
Fix a finite set \(Q\) of primes disjoint from \(\{2,3,5\}\).  For
\begin{equation}\label{eq:N-exponents}
        N(a,b,c;\mathbf e)=P_2\times P_3\times P_5\times\prod_{q\in Q}P_q,
        \qquad |P_2|=2^a,
        \ |P_3|=3^b,
        \ |P_5|=5^c,
        \ |P_q|=q^{e_q},
\end{equation}
with \(a,b,c\geq0\) and \(e_q\geq1\), the function
\begin{equation}\label{eq:F-function}
        F(a,b,c;\mathbf e)=\gammainv(A_5\times N(a,b,c;\mathbf e))
\end{equation}
is strictly decreasing in each of \(a,b,c,e_q\) when the corresponding variable is increased and all other variables are fixed.
\end{proposition}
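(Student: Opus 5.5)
Write the function explicitly via Theorem \ref{thm:prime-power-criterion}. Since the direct product \(N(a,b,c;\mathbf e)\) is nilpotent with Sylow subgroups \(P_2,P_3,P_5,P_q\), formula \eqref{eq:prime-power-gamma} gives
\[
        F(a,b,c;\mathbf e)=\frac{5}{2^{a+2}+1}+\frac{10}{3^{b+1}+1}+\frac{6}{5^{c+1}+1}+\sum_{q\in Q}\frac{1}{q^{e_q}+1}.
\]
This is a sum of terms, each depending on exactly one of the variables \(a,b,c,e_q\). The prime support \(Q\) is fixed, and \(e_q\geq1\) keeps every \(q\in Q\) in \(\pi(N)\). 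So the index set of the sum does not change when a single variable moves. Note also that \(a,b,c\) may pass through \(0\), where the corresponding \(P_p\) is trivial. The formula still holds there because \(|P_p|=1\) is exactly the neutral convention used in Theorem \ref{thm:A5-compensation}.

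When one variable is increased with the others fixed, the difference \(F(\ldots,t+1,\ldots)-F(\ldots,t,\ldots)\) reduces to the difference of a single term. It remains to show that each one-variable term is strictly decreasing. Each term has the form \(\lambda/(r^{t+k}+1)\) with \(\lambda>0\) and a prime \(r\geq2\):
\begin{itemize}
\item \((\lambda,r,k)=(5,2,2)\) for \(a\);
\item \((10,3,1)\) for \(b\);
\item \((6,5,1)\) for \(c\);
\item \((1,q,0)\) for \(e_q\).
\end{itemize}
Since \(r\geq2\), the map \(t\mapsto r^{t+k}\) is strictly increasing on integers \(t\geq0\). Hence \(t\mapsto r^{t+k}+1\) is strictly increasing and positive, and \(\lambda/(r^{t+k}+1)\) is strictly decreasing. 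Explicitly,
\[
\frac{\lambda}{r^{t+k}+1}-\frac{\lambda}{r^{t+k+1}+1}=\frac{\lambda\, r^{t+k}(r-1)}{(r^{t+k}+1)(r^{t+k+1}+1)}>0 .
\]
By induction, strict decrease for unit steps gives strict decrease for arbitrary increases of that variable.

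There is no real obstacle here. The only point needing care is that the term structure is stable under the change of variable: increasing \(a,b,c\) from \(0\) to \(1\) does not add a new prime to the sum, because \(2,3,5\) are already in \(\pi(A_5)\). This is the reason for separating the old primes from the new primes \(q\in Q\) with fixed support. I would state this observation explicitly before the computation.
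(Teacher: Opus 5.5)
Your proof is correct and follows essentially the same route as the paper: both read off the separated formula \eqref{eq:prime-power-gamma} and note that each summand is a positive multiple of a map $t\mapsto 1/(r^{t}+1)$ with $r\geq 2$, which is strictly decreasing. Your explicit difference computation, and your remark that the index set of the sum does not change when $a,b,c$ pass through $0$, are simply more careful versions of the paper's one-line argument.
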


\begin{proof}
By \eqref{eq:prime-power-gamma},
\begin{equation}\label{eq:F-expanded}
        F(a,b,c;\mathbf e)
        =\frac{5}{2^{a+2}+1}
          +\frac{10}{3^{b+1}+1}
          +\frac{6}{5^{c+1}+1}
          +\sum_{q\in Q}\frac{1}{q^{e_q}+1}.
\end{equation}
For any integer \(r\geq2\), the function \(t\mapsto 1/(r^t+1)\) is strictly decreasing in \(t\geq0\).  Each displayed summand is a positive scalar multiple of such a function.  Hence \(F\) is strictly decreasing in every exponent.
\end{proof}

Proposition \ref{prop:monotonicity} shows that, once the prime support is fixed, the equality \(\gamma=9/2\) is rigid with respect to raising Sylow orders.  In the squarefree construction, replacing any \(\C_q\) by a nontrivial larger \(q\)-group lowers the contribution from \((q+1)^{-1}\) to \((q^e+1)^{-1}\), so the equality is destroyed unless a further compensation prime is added.

\section{Exact verification without group enumeration}

The counterexample can be checked without enumerating elements of a group of order \(6630591687720\).  It is enough to verify three finite lists:
\begin{equation}\label{eq:verification-lists}
\begin{aligned}
        &(\nuu_2(A_5),\sig_2(A_5))=(5,4),
        \quad (\nuu_3(A_5),\sig_3(A_5))=(10,3),
        \quad (\nuu_5(A_5),\sig_5(A_5))=(6,5),\\
        &\pi(N_0)=\{2,7,11,13,17,19,29,71,83\},\\
        &\frac18+\frac1{12}+\frac1{14}+\frac1{18}+\frac1{20}+\frac1{30}+\frac1{72}+\frac1{84}=\frac49.
\end{aligned}
\end{equation}
The first line is Lemma \ref{lem:A5-data}; the second line is the definition of \(N_0\); the third line is Lemma \ref{lem:main-identity}.  Lemma \ref{lem:direct-product} then certifies all Sylow data for the product.

For clarity, the complete certificate can be compressed into the table below.  The last column is the summand in \(\gamma(G_0)\).

\begin{center}
\begin{tabular}{>{\(}c<{\)}>{\(}c<{\)}>{\(}c<{\)}>{\(}c<{\)}}
\toprule
p & \nuu_p(G_0) & \sig_p(G_0) & \nuu_p(G_0)/(\sig_p(G_0)+1) \\
\midrule
2  & 5  & 8  & 5/9 \\
3  & 10 & 3  & 10/4 \\
5  & 6  & 5  & 6/6 \\
7  & 1  & 7  & 1/8 \\
11 & 1  & 11 & 1/12 \\
13 & 1  & 13 & 1/14 \\
17 & 1  & 17 & 1/18 \\
19 & 1  & 19 & 1/20 \\
29 & 1  & 29 & 1/30 \\
71 & 1  & 71 & 1/72 \\
83 & 1  & 83 & 1/84 \\
\bottomrule
\end{tabular}
\end{center}

Summing the fourth column gives
\begin{equation}\label{eq:table-sum}
        \frac59+\frac{10}{4}+\frac66
        +\frac18+\frac1{12}+\frac1{14}+\frac1{18}+\frac1{20}+\frac1{30}+\frac1{72}+\frac1{84}
        =\frac92.
\end{equation}
This table also reveals why the construction is robust: all new Sylow subgroups are normal and therefore have coefficient \(1\), while the only changed old coefficient is the Sylow \(2\)-order in the denominator.

\section{Consequences and corrected scope}

The counterexample has several immediate consequences for statements involving the value \(9/2\).

\begin{corollary}\label{cor:false-nonsolvable}
The implication
\begin{equation}\label{eq:false-implication}
        \gammainv(G)=\frac92,
        \quad G\text{ finite nonsolvable}
        \quad\Longrightarrow\quad
        G\cong A_5
\end{equation}
is false.
\end{corollary}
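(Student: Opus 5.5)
To refute the implication \eqref{eq:false-implication} it is enough to exhibit a single finite nonsolvable group \(G\) with \(\gamma(G)=9/2\) and \(G\not\cong A_5\). We take the group \(G_0=A_5\times N_0\) of Theorem \ref{thm:main-counterexample}, with \(N_0\) as in \eqref{eq:N0-def}. The hypotheses of the implication hold for \(G_0\), but its conclusion fails.

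The steps are as follows.

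\textbf{Nonsolvability.} \(G_0\) is nonsolvable because it has \(A_5\) as a quotient, via the projection \eqref{eq:quotient-A5}. Quotients of solvable groups are solvable, and \(A_5\) is not.

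\textbf{Not isomorphic to \(A_5\).} By \eqref{eq:G0-order}, \(|G_0|=6630591687720\neq 60=|A_5|\), so \(G_0\not\cong A_5\).

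\textbf{Value of the invariant.} The equality \(\gamma(G_0)=9/2\) comes from Corollary \ref{cor:squarefree-criterion} with \(Q=Q_0\) from \eqref{eq:Q0-def}. The criterion \eqref{eq:squarefree-iff} reduces this equality to \(\sum_{q\in Q_0}1/(q+1)=4/9\), which is Lemma \ref{lem:main-identity}. Equivalently, one can read it off the explicit computation \eqref{eq:G0-final-computation}.

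No step is a real obstacle, since everything has already been established in Theorem \ref{thm:main-counterexample}. The only point needing care is the hypothesis \(Q_0\cap\{2,3,5\}=\varnothing\) in Corollary \ref{cor:squarefree-criterion}. It holds by inspection of \eqref{eq:Q0-def}. It guarantees that the primes of \(Q_0\) enter \(\gamma(G_0)\) only as new terms \((q+1)^{-1}\) and do not alter the \(3\)- and \(5\)-contributions of \(A_5\).
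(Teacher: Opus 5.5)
Your proposal is correct and matches the paper's proof, which takes \(G=G_0\) and cites Theorem \ref{thm:main-counterexample} for nonsolvability, \(G_0\not\cong A_5\), and \(\gamma(G_0)=9/2\). You unpack that theorem's three ingredients: the \(A_5\) quotient, the order comparison, and Corollary \ref{cor:squarefree-criterion} combined with Lemma \ref{lem:main-identity}, just as the paper does when it proves the theorem.
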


\begin{proof}
Take \(G=G_0\).  Theorem \ref{thm:main-counterexample} gives \(\gamma(G_0)=9/2\), \(G_0\) nonsolvable, and \(G_0\not\cong A_5\).
\end{proof}

\begin{corollary}\label{cor:not-radical-invariant}
The invariant \(\gamma\) is not determined by the semisimple quotient of a nonsolvable group.  Moreover, the equality \(\gamma(G)=\gamma(A_5)\), together with \(G/\Rad(G)\cong A_5\), does not force \(G\cong A_5\).  In particular, there are groups \(G\) with
\begin{equation}\label{eq:same-quotient}
        G/\Rad(G)\cong A_5,
        \qquad
        \gammainv(G)=\gammainv(A_5),
        \qquad
        G\not\cong A_5.
\end{equation}
\end{corollary}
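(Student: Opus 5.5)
The witness will be $G=G_0=A_5\times N_0$ from Theorem \ref{thm:main-counterexample}. Each of the three conditions in \eqref{eq:same-quotient} then reduces to a fact already established, and the two general assertions of the corollary follow from this single example.

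\emph{The radical quotient.} Here I invoke \eqref{eq:radical}, or argue directly. The cyclic factor $N_0$ is a solvable normal subgroup, so $N_0\le\Rad(G_0)$. The image of $\Rad(G_0)$ in $G_0/N_0\cong A_5$ is a solvable normal subgroup of $A_5$. Since $A_5$ is simple and nonsolvable, that image is trivial. Hence $\Rad(G_0)=N_0$ and $G_0/\Rad(G_0)\cong A_5$.

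\emph{The invariant.} Equation \eqref{eq:g0-gamma} gives $\gamma(G_0)=9/2$, and \eqref{eq:gamma-A5} gives $\gamma(A_5)=9/2$. Therefore $\gamma(G_0)=\gamma(A_5)$.

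\emph{Non-isomorphism.} $G_0\not\cong A_5$ because $|G_0|=60\,|N_0|>60$, as recorded in \eqref{eq:G0-order}.

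These three facts prove \eqref{eq:same-quotient}, and with it the second sentence of the corollary.

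For the first sentence, that $\gamma$ is not determined by the semisimple quotient, I will exhibit two groups with the same quotient $A_5$ but different values of $\gamma$. I take $A_5\times\C_2$ and $A_5\times\C_7$. By the argument above, each has $G/\Rad(G)\cong A_5$. By \eqref{eq:three-signs}, their invariants are $9/2-4/9$ and $9/2+1/8$, which are distinct. Comparing either of these with $A_5$ itself also suffices. No step here is a genuine obstacle. The only point requiring care is the radical computation, and it rests on the simplicity and nonsolvability of $A_5$.
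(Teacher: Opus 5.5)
Your proposal is correct and matches the paper's proof: the paper also uses $A_5$, $A_5\times\C_2$, $A_5\times\C_7$ and \eqref{eq:three-signs} for the first assertion, and $G_0$ with \eqref{eq:radical} and $\gamma(G_0)=\gamma(A_5)=9/2$ for the second. Your direct argument that $\Rad(G_0)=N_0$ is correct and simply expands what the paper cites.
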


\begin{proof}
The groups \(A_5\), \(A_5\times\C_2\), and \(A_5\times\C_7\) have the same semisimple quotient \(A_5\), but \eqref{eq:three-signs} shows that their \(\gamma\)-values are not the same.  Thus \(\gamma\) is not determined by the semisimple quotient.  For the final assertion, take \(G=G_0\).  Equation \eqref{eq:radical} gives \(G_0/\Rad(G_0)\cong A_5\), and \eqref{eq:G0-final-computation} gives \(\gamma(G_0)=\gamma(A_5)=9/2\), while \(G_0\not\cong A_5\).
\end{proof}

\begin{corollary}\label{cor:many-counterexamples}
There exist at least four pairwise nonisomorphic finite nonsolvable groups \(G\not\cong A_5\) such that \(\gamma(G)=9/2\).
\end{corollary}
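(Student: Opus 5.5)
The plan is to take the four groups of Proposition \ref{prop:more-certificates},
\[
        G_i=A_5\times\C_2\times\prod_{q\in Q_i}\C_q\qquad(1\le i\le4).
\]
That proposition, together with Corollary \ref{cor:squarefree-criterion}, already shows that each \(G_i\) is nonsolvable, is not isomorphic to \(A_5\), and has \(\gamma(G_i)=9/2\). The only thing left to prove is that the four groups are pairwise nonisomorphic.

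For this I will use the invariant \(\pi(G)\), or simply the order. The primes dividing \(|G_i|\) are
\[
        \pi(G_i)=\{2,3,5\}\cup Q_i,
\]
because \(Q_i\) is disjoint from \(\{2,3,5\}\). Isomorphic groups have equal orders and hence equal prime sets, so it is enough to check that the four sets \(Q_1,\dots,Q_4\) are pairwise distinct. This is read off from \eqref{eq:four-Qs}:
\begin{itemize}
\item \(71\in Q_1\), but \(71\) lies in none of \(Q_2,Q_3,Q_4\);
\item \(179\in Q_2\) only;
\item \(503\in Q_3\) only;
\item \(1259\in Q_4\) only.
\end{itemize}
Each \(Q_i\) therefore contains a prime that lies in no other \(Q_j\). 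So the orders \(|G_i|=120\prod_{q\in Q_i}q\) are pairwise distinct, and the groups are pairwise nonisomorphic.

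There is no real obstacle here. The one point that needs a moment's care is that each \(Q_i\) really consists of primes outside \(\{2,3,5\}\); otherwise Corollary \ref{cor:squarefree-criterion} would not apply and the order formula would change. This holds by inspection: \(179\), \(503\) and \(1259\) are prime, and every listed element exceeds \(5\).

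Finally, nonsolvability of each \(G_i\) follows as in the proof of Theorem \ref{thm:main-counterexample}, since \(A_5\) is a quotient of \(G_i\). The fact that \(G_i\not\cong A_5\) follows because \(|G_i|>60\).
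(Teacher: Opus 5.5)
Your proof is correct and follows the paper's argument: take the four groups of Proposition~\ref{prop:more-certificates}, which have $\gamma=9/2$, and note that they are pairwise nonisomorphic because their orders $120\prod_{q\in Q_i}q$ differ. Naming a prime that lies in only one $Q_i$ (namely $71,179,503,1259$) simply makes explicit the paper's remark that the four prime products are distinct.
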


\begin{proof}
Use the four groups \eqref{eq:GQi}.  Their orders are distinct because the products of the primes in \(Q_1,Q_2,Q_3,Q_4\) are distinct.  Hence the groups are pairwise nonisomorphic.  Proposition \ref{prop:more-certificates} gives \(\gamma=9/2\) for each of them.
\end{proof}

The results above do not contradict the simple-group theorem of \cite{AnabantiAsboeiSiberian}.  They identify the precise point at which the extension to all nonsolvable finite groups fails.  In a simple group, there is no nontrivial nilpotent radical whose Sylow orders can alter denominators and introduce new reciprocal terms.  In a nonsolvable group with solvable radical, the equality \(\gamma=9/2\) can be restored by central compensation.

This suggests that any true extension should include a condition excluding the compensation equation \eqref{eq:compensation-eq-expanded}.  Natural possible restrictions include simplicity, almost simplicity, or a condition on the solvable radical.  The present paper does not claim such a corrected theorem; it proves that the unrestricted nonsolvable formulation is untenable.

\section{Defect calculus at the value \texorpdfstring{$9/2$}{9/2}}

The compensation equation can be written in a form that separates losses and gains.  This form is useful because it shows that the counterexample is not a numerical accident.  Let \(N\) be finite nilpotent and put
\begin{equation}\label{eq:defect-dp}
        d_p=|N_p|\qquad(p\in\{2,3,5\}).
\end{equation}
Define the old-prime defects
\begin{equation}\label{eq:D-definitions}
\begin{aligned}
        D_2(d_2)&=1-\frac{5}{4d_2+1},\\
        D_3(d_3)&=\frac52-\frac{10}{3d_3+1},\\
        D_5(d_5)&=1-\frac{6}{5d_5+1},
\end{aligned}
\end{equation}
and the new-prime gain
\begin{equation}\label{eq:R-definition}
        R(N)=\sum_{q\in\pi(N)\setminus\{2,3,5\}}\frac{1}{|N_q|+1}.
\end{equation}
Then \eqref{eq:A5N-difference} is exactly
\begin{equation}\label{eq:defect-master}
        \gammainv(A_5\times N)=\frac92-D_2(d_2)-D_3(d_3)-D_5(d_5)+R(N).
\end{equation}
Thus equality with \(9/2\) is equivalent to
\begin{equation}\label{eq:defect-balance}
        R(N)=D_2(d_2)+D_3(d_3)+D_5(d_5).
\end{equation}
The three defects admit the closed forms
\begin{equation}\label{eq:D-closed}
\begin{aligned}
        D_2(d)&=\frac{4(d-1)}{4d+1},\\
        D_3(d)&=\frac{5}{2}\cdot\frac{3d-3}{3d+1}=\frac{15(d-1)}{2(3d+1)},\\
        D_5(d)&=\frac{5d-5}{5d+1}=\frac{5(d-1)}{5d+1}.
\end{aligned}
\end{equation}
For \(d\geq1\), each function in \eqref{eq:D-closed} is nonnegative and vanishes only at \(d=1\).  If \(d<d'\), then
\begin{equation}\label{eq:D2-monotone}
        D_2(d')-D_2(d)=\frac{20(d'-d)}{(4d'+1)(4d+1)}>0,
\end{equation}
\begin{equation}\label{eq:D3-monotone}
        D_3(d')-D_3(d)=\frac{30(d'-d)}{(3d'+1)(3d+1)}>0,
\end{equation}
\begin{equation}\label{eq:D5-monotone}
        D_5(d')-D_5(d)=\frac{30(d'-d)}{(5d'+1)(5d+1)}>0.
\end{equation}
Hence old-prime defects strictly increase as the corresponding Sylow order in the nilpotent layer grows.

The least nonzero old-prime defect is \(D_2(2)\).  Indeed, because the nontrivial possible values of \(d_2,d_3,d_5\) are \(2^a,3^b,5^c\) with \(a,b,c\geq1\), the minima are
\begin{equation}\label{eq:D-minima}
        \min_{a\geq1}D_2(2^a)=D_2(2)=\frac49,
\end{equation}
\begin{equation}\label{eq:D3-minimum}
        \min_{b\geq1}D_3(3^b)=D_3(3)=\frac32,
\end{equation}
\begin{equation}\label{eq:D5-minimum}
        \min_{c\geq1}D_5(5^c)=D_5(5)=\frac{10}{13}.
\end{equation}
Since
\begin{equation}\label{eq:min-order}
        \frac49<\frac{10}{13}<\frac32,
\end{equation}
the \(2\)-primary central layer of order \(2\) is the unique way to create the smallest possible defect.

\begin{proposition}[Single-defect threshold]\label{prop:single-defect-threshold}
Let \(Q\) be a finite set of primes disjoint from \(\{2,3,5\}\) and put \(E_Q=\prod_{q\in Q}\C_q\).  Then
\begin{equation}\label{eq:threshold-formula}
        \gammainv(A_5\times\C_2\times E_Q)-\frac92
        =\sum_{q\in Q}\frac{1}{q+1}-\frac49.
\end{equation}
Consequently
\begin{equation}\label{eq:threshold-trichotomy}
\begin{array}{rcl}
        \sum_{q\in Q}\dfrac1{q+1}<\dfrac49 &\Longleftrightarrow& \gammainv(A_5\times\C_2\times E_Q)<\dfrac92,\\[0.8em]
        \sum_{q\in Q}\dfrac1{q+1}=\dfrac49 &\Longleftrightarrow& \gammainv(A_5\times\C_2\times E_Q)=\dfrac92,\\[0.8em]
        \sum_{q\in Q}\dfrac1{q+1}>\dfrac49 &\Longleftrightarrow& \gammainv(A_5\times\C_2\times E_Q)>\dfrac92.
\end{array}
\end{equation}
\end{proposition}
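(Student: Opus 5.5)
The plan is to read the identity \eqref{eq:threshold-formula} off Corollary \ref{cor:squarefree-criterion}, and then take the trichotomy by subtraction. The group $A_5\times\C_2\times E_Q$ is of the form $A_5\times N$ with $N=\C_2\times E_Q$. This $N$ is cyclic, since its factors have pairwise coprime orders, and in particular it is nilpotent. Its Sylow data are $d_2=2$, $d_3=d_5=1$, and $d_q=q$ for $q\in Q$.

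First step: invoke \eqref{eq:squarefree-formula} directly. Alternatively, substitute these values into \eqref{eq:A5N-formula}. Either way we get
\[
\gammainv(A_5\times\C_2\times E_Q)=\frac{73}{18}+\sum_{q\in Q}\frac{1}{q+1}.
\]
Subtracting $\frac92=\frac{81}{18}$ gives $\sum_{q\in Q}\frac1{q+1}-\frac{8}{18}$, which is \eqref{eq:threshold-formula}. Equivalently, in the language of \eqref{eq:defect-master}, we have $D_2(2)=\frac49$, $D_3(1)=D_5(1)=0$, and $R(N)=\sum_{q\in Q}(q+1)^{-1}$. The last holds because $\pi(N)\setminus\{2,3,5\}=Q$ and $|N_q|=q$.

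Second step: derive \eqref{eq:threshold-trichotomy}. By \eqref{eq:threshold-formula}, the quantities $\gammainv(A_5\times\C_2\times E_Q)-\frac92$ and $\sum_{q\in Q}\frac1{q+1}-\frac49$ are equal rational numbers. They therefore have the same sign, which is negative, zero, or positive. This gives each of the three biconditionals at once, and the middle one recovers \eqref{eq:squarefree-iff}.

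There is no real obstacle. The only point needing care is that the case $Q=\varnothing$ is included. There the sum is empty, so it equals $0$, and we get $\gammainv(A_5\times\C_2)=\frac{73}{18}<\frac92$, consistent with \eqref{eq:three-signs}. We also need $Q\cap\{2,3,5\}=\varnothing$, so that no $q\in Q$ alters $d_2,d_3,d_5$. This is guaranteed by hypothesis.
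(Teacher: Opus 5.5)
Your proposal is correct and follows the paper's proof. Both subtract $9/2=81/18$ from the value $73/18+\sum_{q\in Q}(q+1)^{-1}$ supplied by Corollary~\ref{cor:squarefree-criterion}, and then read off the trichotomy from the equality of the two differences; your checks of the case $Q=\varnothing$ and of the role of $Q\cap\{2,3,5\}=\varnothing$ are accurate points the paper leaves implicit.
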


\begin{proof}
This is Corollary \ref{cor:squarefree-criterion} rewritten after subtracting \(9/2\).  Directly,
\[
        \frac{73}{18}-\frac92=\frac{73}{18}-\frac{81}{18}=-\frac49,
\]
which proves \eqref{eq:threshold-formula}; the trichotomy follows immediately.
\end{proof}

For the main set \(Q_0\), the equality is exact.  Removing or adding one prime produces a controlled strict inequality.  For instance,
\begin{equation}\label{eq:remove83}
\begin{aligned}
        \gammainv\left(A_5\times\C_2\times\prod_{q\in Q_0\setminus\{83\}}\C_q\right)
        &=\frac92-\frac1{84},\\
        \gammainv\left(A_5\times\C_2\times\prod_{q\in Q_0\cup\{r\}}\C_q\right)
        &=\frac92+\frac1{r+1}
        \quad(r\notin Q_0\cup\{2,3,5\}).
\end{aligned}
\end{equation}
Thus the equality is not caused by a limiting or approximate argument.  It is an exact point on a rational threshold.

\section{A certificate theorem for finite prime sets}

It is convenient to package the preceding calculations as a certificate theorem.  Let \(Q\) be a finite set of primes disjoint from \(\{2,3,5\}\).  Define
\begin{equation}\label{eq:theta-Q}
        \Theta(Q)=\sum_{q\in Q}\frac1{q+1},
        \qquad
        M(Q)=\prod_{q\in Q}q,
        \qquad
        E_Q=\prod_{q\in Q}\C_q.
\end{equation}
Then
\begin{equation}\label{eq:GQ-def-cert}
        G(Q)=A_5\times\C_2\times E_Q
\end{equation}
has order
\begin{equation}\label{eq:GQ-order}
        |G(Q)|=120M(Q).
\end{equation}
The Sylow polynomial of \(G(Q)\) is
\begin{equation}\label{eq:GQ-polynomial}
        \Sp(G(Q),x)=5x^8+10x^3+6x^5+\sum_{q\in Q}x^q.
\end{equation}
Indeed, the coefficient of \(x^8\) is \(5\) because the central \(\C_2\) doubles the order of every Sylow \(2\)-subgroup of \(A_5\), while all new Sylow subgroups are unique.

\begin{theorem}[Prime-set certificate theorem]\label{thm:certificate}
Let \(Q\) be a finite set of primes with \(Q\cap\{2,3,5\}=\varnothing\).  Then
\begin{equation}\label{eq:certificate-equivalence}
        G(Q)\text{ is a counterexample to }\eqref{eq:conjectural-statement}
        \quad\Longleftrightarrow\quad
        \Theta(Q)=\frac49.
\end{equation}
More explicitly, if \(\Theta(Q)=4/9\), then
\begin{equation}\label{eq:certificate-output}
        \gammainv(G(Q))=\frac92,
        \qquad
        G(Q)\not\cong A_5,
        \qquad
        G(Q)\text{ is nonsolvable}.
\end{equation}
\end{theorem}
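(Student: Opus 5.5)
The plan is to reduce the equivalence \eqref{eq:certificate-equivalence} to three facts about \(G(Q)\): it is nonsolvable, it is not isomorphic to \(A_5\), and \(\gamma(G(Q))=9/2\) holds if and only if \(\Theta(Q)=4/9\). Being a counterexample to \eqref{eq:conjectural-statement} means exactly that \(G(Q)\) is finite nonsolvable, \(G(Q)\not\cong A_5\), and \(\gamma(G(Q))=9/2\). Once the first two properties are shown to hold for every admissible \(Q\), the equivalence collapses to the third condition, which is Corollary \ref{cor:squarefree-criterion}.

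The unconditional properties are handled first.

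\textbf{Nonsolvability.} The projection \(G(Q)\to A_5\) is surjective, and a quotient of a solvable group is solvable, so \(G(Q)\) is nonsolvable. This is the same argument used for \(G_0\) in the proof of Theorem \ref{thm:main-counterexample}.

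\textbf{Not isomorphic to \(A_5\).} By \eqref{eq:GQ-order}, \(|G(Q)|=120M(Q)\geq 120>60=|A_5|\). This holds even when \(Q=\varnothing\), because the factor \(\C_2\) is always present.

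For the equivalence itself, apply Corollary \ref{cor:squarefree-criterion}, which is legitimate since \(Q\cap\{2,3,5\}=\varnothing\). Equivalently, Proposition \ref{prop:single-defect-threshold} gives \(\gamma(G(Q))-9/2=\Theta(Q)-4/9\), so \(\gamma(G(Q))=9/2\) if and only if \(\Theta(Q)=4/9\). Alternatively, one can integrate the Sylow polynomial \eqref{eq:GQ-polynomial} term by term to get \(73/18+\Theta(Q)\). That polynomial is itself justified by Lemma \ref{lem:direct-product} together with the Sylow data of Lemma \ref{lem:A5-data}.

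Both directions then follow. If \(\Theta(Q)=4/9\), then \(\gamma(G(Q))=9/2\), and with the two unconditional properties this yields \eqref{eq:certificate-output}, so \(G(Q)\) is a counterexample. Conversely, if \(G(Q)\) is a counterexample, then in particular \(\gamma(G(Q))=9/2\), hence \(\Theta(Q)=4/9\).

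There is no real obstacle here. The only point needing care is to read ``counterexample'' as the conjunction of the three properties and to notice that two of them hold for all \(Q\), including \(Q=\varnothing\) (where \(\Theta=0\neq 4/9\), consistent with the equivalence).
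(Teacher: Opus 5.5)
Your proposal is correct and matches the paper's proof: \(\gamma(G(Q))=73/18+\Theta(Q)\) from the Sylow polynomial (equivalently Corollary \ref{cor:squarefree-criterion}), nonsolvability via the quotient \(A_5\), and \(G(Q)\not\cong A_5\) via \(|G(Q)|=120M(Q)>60\). You also observe that the last two properties hold for every admissible \(Q\), so the equivalence reduces to the \(\gamma\)-condition; this makes the reverse direction slightly more explicit than the paper does.
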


\begin{proof}
By \eqref{eq:GQ-polynomial},
\begin{equation}\label{eq:certificate-integral}
\begin{aligned}
        \gammainv(G(Q))
        &=\frac59+\frac{10}{4}+\frac66+\sum_{q\in Q}\frac1{q+1}\\
        &=\frac{73}{18}+\Theta(Q).
\end{aligned}
\end{equation}
Since \(9/2=81/18\), the equality \(\gamma(G(Q))=9/2\) is equivalent to \(\Theta(Q)=8/18=4/9\).  If this equality holds, then \(G(Q)\) maps onto the nonsolvable group \(A_5\), so \(G(Q)\) is nonsolvable.  Also \(|G(Q)|=120M(Q)>60=|A_5|\), so \(G(Q)\not\cong A_5\).
\end{proof}

The certificate theorem gives a concise independent verification of each row in the following table.  The equality \(\Theta(Q_i)=4/9\) is proved in Proposition \ref{prop:more-certificates}.

\begin{center}
\begin{tabular}{>{\(}c<{\)}>{\(}l<{\)}>{\(}r<{\)}>{\(}r<{\)}}
\toprule
 i & Q_i & M(Q_i) & |G(Q_i)| \\
\midrule
1 & \{7,11,13,17,19,29,71,83\}     & 55254930731  & 6630591687720 \\
2 & \{7,11,13,17,19,23,83,179\}    & 110483025653 & 13257963078360 \\
3 & \{7,11,13,17,19,29,41,503\}    & 193368816641 & 23204257996920 \\
4 & \{7,11,13,17,19,23,59,1259\}   & 552385382549 & 66286245905880 \\
\bottomrule
\end{tabular}
\end{center}

The table is not used as a numerical search result.  It records exact certificates: each row is checked by the displayed rational identity over denominator \(2520\), and Theorem \ref{thm:certificate} converts the identity into a group-theoretic counterexample.

\section{A larger nilpotent direct-product family}

The construction can be varied by changing the old-prime defect before solving the new-prime equation.  We record the exact formula because it gives the strongest statement obtained by the present method.

Let \(a,b,c\geq0\), let \(Q\cap\{2,3,5\}=\varnothing\), and let \(e_q\geq1\) for \(q\in Q\).  Put
\begin{equation}\label{eq:large-family-N}
        N(a,b,c;Q,\mathbf e)=P_2\times P_3\times P_5\times\prod_{q\in Q}P_q,
\end{equation}
where
\begin{equation}\label{eq:large-family-orders}
        |P_2|=2^a,
        \qquad |P_3|=3^b,
        \qquad |P_5|=5^c,
        \qquad |P_q|=q^{e_q}.
\end{equation}
The groups \(P_p\) need not be cyclic; only their orders enter \(\gamma(A_5\times N)\), because they are unique Sylow subgroups in the nilpotent factor.

\begin{corollary}[Complete nilpotent direct-product criterion]\label{cor:complete-central}
For \(N=N(a,b,c;Q,\mathbf e)\),
\begin{equation}\label{eq:complete-central-gamma}
\begin{aligned}
        \gammainv(A_5\times N)-\frac92
        &=\sum_{q\in Q}\frac1{q^{e_q}+1}
          -D_2(2^a)-D_3(3^b)-D_5(5^c),
\end{aligned}
\end{equation}
where \(D_2,D_3,D_5\) are given in \eqref{eq:D-definitions}.  Hence
\begin{equation}\label{eq:complete-central-iff}
        \gammainv(A_5\times N)=\frac92
        \quad\Longleftrightarrow\quad
        \sum_{q\in Q}\frac1{q^{e_q}+1}=D_2(2^a)+D_3(3^b)+D_5(5^c).
\end{equation}
\end{corollary}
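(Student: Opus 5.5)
The plan is to reduce the statement to formula \eqref{eq:prime-power-gamma} of Theorem \ref{thm:prime-power-criterion} and then regroup terms using the defect notation \eqref{eq:D-definitions}. Nothing new about Sylow subgroups is needed. For \(N=N(a,b,c;Q,\mathbf e)\), the unique Sylow subgroups of \(N\) have orders \(d_2=2^a\), \(d_3=3^b\), \(d_5=5^c\) and \(d_q=q^{e_q}\) for \(q\in Q\). All other primes have \(d_p=1\) and do not lie in \(\pi(N)\). As remarked before the corollary, only these orders enter the computation, since Corollary \ref{cor:nilpotent-product} uses \(\nu_p(N)=1\) and \(\sigma_p(N)=|N_p|\) regardless of the isomorphism types of the \(P_p\). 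Theorem \ref{thm:A5-compensation}, or directly \eqref{eq:prime-power-gamma}, therefore gives
\[
\gamma(A_5\times N)=\frac{5}{4\cdot 2^a+1}+\frac{10}{3\cdot3^b+1}+\frac{6}{5\cdot5^c+1}+\sum_{q\in Q}\frac{1}{q^{e_q}+1}.
\]
Here \(\pi(N)\setminus\{2,3,5\}=Q\), because each \(e_q\geq1\) and \(Q\) is disjoint from \(\{2,3,5\}\).

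Next I subtract \(9/2=1+\tfrac52+1\), splitting it across the three old-prime terms as in \eqref{eq:A5N-difference}. Each bracket is, by definition \eqref{eq:D-definitions}, the negative of a defect:
\[
\frac{5}{4d_2+1}-1=-D_2(d_2),\qquad \frac{10}{3d_3+1}-\frac52=-D_3(d_3),\qquad \frac{6}{5d_5+1}-1=-D_5(d_5).
\]
Substituting \(d_2=2^a\), \(d_3=3^b\), \(d_5=5^c\) yields \eqref{eq:complete-central-gamma}. Equivalently, this is the master identity \eqref{eq:defect-master} with \(R(N)=\sum_{q\in Q}(q^{e_q}+1)^{-1}\).

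The equivalence \eqref{eq:complete-central-iff} then follows by setting the left side of \eqref{eq:complete-central-gamma} to zero. This is the balance condition \eqref{eq:defect-balance}.

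There is no real obstacle. The only point needing care is the convention when \(a\), \(b\) or \(c\) equals \(0\). In that case \(d_p=1\), the sum in \(\gamma\) still runs over \(p\in\pi(A_5)\), and the corresponding defect vanishes, so the formula is uniform in all exponents. I will state this explicitly, citing the neutral convention of Definition \ref{def:neutral-convention}.
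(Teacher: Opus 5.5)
Your proposal is correct and takes the same route as the paper. You substitute the Sylow orders into \eqref{eq:prime-power-gamma} of Theorem \ref{thm:prime-power-criterion}, rewrite the three old-prime brackets as $-D_2(2^a)$, $-D_3(3^b)$, $-D_5(5^c)$ via \eqref{eq:D-definitions}, and obtain the equivalence by setting the difference to zero; your check that the defects vanish when $a$, $b$ or $c$ equals $0$ is a harmless extra that the paper leaves implicit.
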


\begin{proof}
This is Theorem \ref{thm:prime-power-criterion} rewritten by using \eqref{eq:D-definitions}.  The right side of \eqref{eq:complete-central-iff} is the total old-prime defect; the left side is the total new-prime gain.
\end{proof}

Several immediate special cases illustrate the range of the criterion:
\begin{equation}\label{eq:special-cases}
\begin{array}{rcl}
 a=b=c=0        &:& \displaystyle \gammainv(A_5\times\prod_{q\in Q}P_q)=\frac92+\sum_{q\in Q}\frac1{q^{e_q}+1},\\[1.0em]
 a=1,\ b=c=0    &:& \displaystyle \gammainv(A_5\times\C_2\times\prod_{q\in Q}P_q)=\frac92-\frac49+\sum_{q\in Q}\frac1{q^{e_q}+1},\\[1.0em]
 a=0,\ b=1,c=0  &:& \displaystyle \gammainv(A_5\times P_3\times\prod_{q\in Q}P_q)=\frac92-\frac32+\sum_{q\in Q}\frac1{q^{e_q}+1},\\[1.0em]
 a=0,\ b=0,c=1  &:& \displaystyle \gammainv(A_5\times P_5\times\prod_{q\in Q}P_q)=\frac92-\frac{10}{13}+\sum_{q\in Q}\frac1{q^{e_q}+1}.
\end{array}
\end{equation}
The first line says that new primes alone overshoot the value \(9/2\).  The next three lines show the three smallest one-prime defects and the amount that must be supplied by new primes.

The main counterexample is the second line of \eqref{eq:special-cases}, together with the identity \eqref{eq:main-identity}.  In this sense the proof is optimal at the first step: it begins with the least possible nonzero old-prime defect and then gives an exact reciprocal certificate for that defect.

\section{Normalizer formulation of the product computation}

The proof above used Sylow projections.  The same calculation can be expressed through normalizers, which is often the more convenient language in finite group theory.  If \(P\in\Syl_p(G)\), then
\begin{equation}\label{eq:sylow-normalizer-count}
        \nuu_p(G)=|G:N_G(P)|.
\end{equation}
For a direct product \(G\times H\) and Sylow subgroups \(P\in\Syl_p(G)\), \(Q\in\Syl_p(H)\), one has
\begin{equation}\label{eq:normalizer-product}
        N_{G\times H}(P\times Q)=N_G(P)\times N_H(Q).
\end{equation}
Indeed,
\begin{equation}\label{eq:normalizer-condition}
        (g,h)(P\times Q)(g,h)^{-1}=P\times Q
        \quad\Longleftrightarrow\quad
        gPg^{-1}=P\text{ and }hQh^{-1}=Q.
\end{equation}
Consequently
\begin{equation}\label{eq:index-product}
\begin{aligned}
        \nuu_p(G\times H)
        &=|G\times H:N_{G\times H}(P\times Q)|\\
        &=|G\times H:N_G(P)\times N_H(Q)|\\
        &=|G:N_G(P)|\,|H:N_H(Q)|\\
        &=\nuu_p(G)\nuu_p(H),
\end{aligned}
\end{equation}
which is the first half of \eqref{eq:product-nu-sigma}.  The second half is
\begin{equation}\label{eq:order-product-again}
        \sig_p(G\times H)=|P\times Q|=|P|\,|Q|=\sig_p(G)\sig_p(H).
\end{equation}
When \(H=N\) is nilpotent, \(Q=N_p\) is normal, so \(N_H(Q)=H\), and the normalizer calculation reduces to
\begin{equation}\label{eq:nilpotent-normalizer}
        N_{G\times N}(P\times N_p)=N_G(P)\times N.
\end{equation}
Thus
\begin{equation}\label{eq:nilpotent-index-normalizer}
        \nuu_p(G\times N)=|G\times N:N_G(P)\times N|=|G:N_G(P)|=\nuu_p(G)
\end{equation}
for old primes \(p\in\pi(G)\), while for a new prime \(q\in\pi(N)\setminus\pi(G)\) the unique Sylow subgroup \(1\times N_q\) gives
\begin{equation}\label{eq:new-prime-normalizer}
        \nuu_q(G\times N)=1,
        \qquad
        \sig_q(G\times N)=|N_q|.
\end{equation}
Equations \eqref{eq:nilpotent-index-normalizer} and \eqref{eq:new-prime-normalizer} are exactly the two ingredients in the compensation formula.

For \(A_5\), the normalizers corresponding to Lemma \ref{lem:A5-data} have orders
\begin{equation}\label{eq:A5-normalizer-orders}
        |N_{A_5}(P_2)|=\frac{60}{5}=12,
        \qquad
        |N_{A_5}(P_3)|=\frac{60}{10}=6,
        \qquad
        |N_{A_5}(P_5)|=\frac{60}{6}=10.
\end{equation}
After adjoining \(N_0\), the old-prime normalizers in \(G_0=A_5\times N_0\) are
\begin{equation}\label{eq:G0-normalizers}
\begin{aligned}
        |N_{G_0}(P_2\times (N_0)_2)|&=12|N_0|,\\
        |N_{G_0}(P_3\times 1)|&=6|N_0|,\\
        |N_{G_0}(P_5\times 1)|&=10|N_0|.
\end{aligned}
\end{equation}
Therefore
\begin{equation}\label{eq:G0-index-normalizers}
        |G_0:N_{G_0}(P_2\times (N_0)_2)|=5,
        \quad
        |G_0:N_{G_0}(P_3\times1)|=10,
        \quad
        |G_0:N_{G_0}(P_5\times1)|=6,
\end{equation}
so the Sylow numbers remain \(5,10,6\).  What changes is not the numerator but the denominator:
\begin{equation}\label{eq:G0-denominator-change}
        |P_2\times (N_0)_2|+1=8+1,
        \qquad
        |P_3\times1|+1=3+1,
        \qquad
        |P_5\times1|+1=5+1.
\end{equation}
This is the precise algebraic source of the term \(5/9\) in \eqref{eq:G0-gamma-expanded}.

\section{Within-family repairs and limitations}

The counterexample does not show that no refined theorem is possible.  It shows that the word ``nonsolvable'' alone is too weak.  Inside the nilpotent direct-product family considered here, the exact obstruction is transparent.

\begin{proposition}[Radical-free and center-free restrictions in the family]\label{prop:family-repairs}
Let \(N\) be finite nilpotent and set \(G=A_5\times N\).  Then
\begin{equation}\label{eq:family-rad-center}
        \Rad(G)=N,
        \qquad
        Z(G)=Z(A_5)\times Z(N)=Z(N).
\end{equation}
Since \(N\) is nilpotent, \(Z(N)\neq1\) whenever \(N\neq1\).  Hence the following are equivalent:
\begin{equation}\label{eq:family-equivalences}
        N=1
        \quad\Longleftrightarrow\quad
        \Rad(G)=1
        \quad\Longleftrightarrow\quad
        Z(G)=1
        \quad\Longleftrightarrow\quad
        G\cong A_5.
\end{equation}
In particular, within the family \(A_5\times N\), any counterexample with \(N\neq1\) necessarily has nontrivial solvable radical and nontrivial center.
\end{proposition}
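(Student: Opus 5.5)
Here is how I would prove the proposition. I would verify the two displayed identities in \eqref{eq:family-rad-center} separately, and then read off the chain of equivalences \eqref{eq:family-equivalences}.

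\textbf{The radical.} Since \(N\) is nilpotent, it is solvable. Its image \(1\times N\) is normal in \(G\), so \(N\leq\Rad(G)\). For the reverse inclusion, note that \(\Rad(G)/N\) is a solvable normal subgroup of \(G/N\cong A_5\). Because \(A_5\) is simple and nonsolvable, this quotient is trivial, and hence \(\Rad(G)=N\).

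\textbf{The center.} I would use the standard fact that the center of a direct product is the product of the centers. The componentwise check is routine: \((a,n)\) commutes with every \((a',1)\) if and only if \(a\in Z(A_5)\), and with every \((1,n')\) if and only if \(n\in Z(N)\). Since \(A_5\) is nonabelian simple, \(Z(A_5)=1\), which gives \(Z(G)=Z(N)\). That a nontrivial nilpotent group has nontrivial center is the standard upper-central-series fact, or equivalently that nontrivial \(p\)-groups have nontrivial center and \(N\) is the direct product of its Sylow subgroups.

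\textbf{The equivalences.} I would prove a cycle of implications.
\begin{enumerate}[label=\textup{(\roman*)}]
\item \(N=1\Rightarrow G\cong A_5\) is immediate.
\item \(G\cong A_5\Rightarrow Z(G)=1\), since \(A_5\) is centerless.
\item \(Z(G)=1\Rightarrow N=1\): we have \(Z(N)=Z(G)=1\), and by the nilpotent-center fact this forces \(N=1\).
\end{enumerate}
Separately, \(\Rad(G)=1\) holds if and only if \(N=1\), directly from \(\Rad(G)=N\).

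An alternative route for \(G\cong A_5\Rightarrow N=1\) is to compare orders: \(|G|=60|N|\). I would list this only as a cross-check.

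\textbf{Final assertion and main difficulty.} The last claim of the proposition is the contrapositive of the equivalences: if \(N\neq1\), then both \(\Rad(G)\) and \(Z(G)\) are nontrivial. I expect no real obstacle. The only point needing care is that the argument for \(\Rad(G)=N\) uses the simplicity and nonsolvability of \(A_5\); it is not merely a consequence of \(N\) being solvable.
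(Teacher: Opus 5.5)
Your proof is correct and follows essentially the same route as the paper: the center of a direct product is the product of the centers, \(Z(A_5)=1\), and a nontrivial nilpotent group has nontrivial center via its decomposition into \(p\)-groups; the equivalences are then read off directly. The only small difference is in how you get \(\Rad(G)=N\). You note that \(\Rad(G)/N\) is a solvable normal subgroup of the simple nonsolvable quotient \(G/N\cong A_5\). The paper instead cites the general fact that the solvable radical of a direct product is the product of the radicals of the factors. Your version is self-contained and does not rely on that general fact.
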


\begin{proof}
The solvable radical of a direct product is the direct product of the solvable radicals of the factors.  Since \(A_5\) is nonabelian simple, \(\Rad(A_5)=1\), while \(\Rad(N)=N\) because \(N\) is nilpotent and hence solvable.  Therefore \(\Rad(G)=N\).  Also \(Z(A_5)=1\), so \(Z(G)=Z(N)\).  A nontrivial finite nilpotent group is the direct product of nontrivial finite \(p\)-groups, and every nontrivial finite \(p\)-group has nontrivial center; hence \(Z(N)\neq1\) when \(N\neq1\).  The equivalences follow.
\end{proof}

Proposition \ref{prop:family-repairs} should be read carefully.  It proves only a within-family statement.  It does not assert that every center-free nonsolvable group with \(\gamma=9/2\) is \(A_5\), nor that every radical-free nonsolvable group with \(\gamma=9/2\) is \(A_5\).  Those are different questions.  The point is narrower and exact: the explicit counterexamples constructed here live in a central abelian solvable radical, while the broader criterion concerns direct products with nilpotent factors.  Any corrected theorem must exclude this radical compensation.

A compact way to summarize the situation is the following four-line scope statement:
\begin{equation}\label{eq:scope-summary}
\begin{aligned}
\mathcal C_{\rm simple}:\quad
& G\text{ finite noncyclic simple},\ \gamma(G)=9/2
  &&\Longrightarrow\quad G\cong A_5,\\
\mathcal C_{\rm nonsol}:\quad
& G\text{ finite nonsolvable},\ \gamma(G)=9/2
  &&\centernot\Longrightarrow\quad G\cong A_5,\\
\mathcal C_{\rm rad}:\quad
& G=A_5\times N,\ N\text{ nilpotent},\ \Rad(G)=1
  &&\Longrightarrow\quad G\cong A_5,\\
\mathcal C_{\rm cen}:\quad
& G=A_5\times N,\ N\text{ nilpotent},\ Z(G)=1
  &&\Longrightarrow\quad G\cong A_5.
\end{aligned}
\end{equation}
Here the first line is the simple-group theorem, the second line is Theorem \ref{thm:main-counterexample}, and the last two lines are the within-family consequences of Proposition \ref{prop:family-repairs}.  The second line is the new contribution: it is the direct contradiction to the nonsolvable conjectural extension.  The last two lines only clarify that the contradiction is produced by a nilpotent radical, not by the almost simple component.

\section{Exact arithmetic of the displayed certificates}

For completeness, and to make the rational verification reproducible, we expand the four certificates of Proposition \ref{prop:more-certificates} as numerator partitions of \(1120\) over the common denominator \(2520\).  Let
\begin{equation}\label{eq:common-denominator-D}
        D=2520=2^3\cdot3^2\cdot5\cdot7.
\end{equation}
For a prime set \(Q\), the identity \(\Theta(Q)=4/9\) is equivalent to
\begin{equation}\label{eq:numerator-certificate-general}
        \sum_{q\in Q}\frac{D}{q+1}=\frac{4D}{9}=1120.
\end{equation}
The four certificates are therefore the four exact partitions
\begin{equation}\label{eq:partition1}
        315+210+180+140+126+84+35+30=1120,
\end{equation}
\begin{equation}\label{eq:partition2}
        315+210+180+140+126+105+30+14=1120,
\end{equation}
\begin{equation}\label{eq:partition3}
        315+210+180+140+126+84+60+5=1120,
\end{equation}
\begin{equation}\label{eq:partition4}
        315+210+180+140+126+105+42+2=1120.
\end{equation}
Each summand in \eqref{eq:partition1}--\eqref{eq:partition4} is of the form \(D/(q+1)\) for the corresponding prime \(q\).  Dividing by \(D\) gives the reciprocal identities, and Theorem \ref{thm:certificate} converts each identity into a finite group.

This last step is purely symbolic:
\begin{equation}\label{eq:symbolic-chain}
\begin{array}{ccccc}
\displaystyle \sum_{q\in Q}\frac{D}{q+1}=1120
&\Longleftrightarrow&
\displaystyle \Theta(Q)=\frac{1120}{2520}=\frac49
&\Longleftrightarrow&
\displaystyle \gammainv(G(Q))=\frac{73}{18}+\frac49=\frac92.
\end{array}
\end{equation}
No numerical approximation, element enumeration, or classification-theoretic input is used in \eqref{eq:symbolic-chain}.

\section{Conclusion}

The conjectural characterization \eqref{eq:conjectural-statement} fails for a structural reason.  The Sylow-integral invariant \(\gamma\) depends simultaneously on the number of Sylow subgroups and on the orders of Sylow subgroups.  Direct product with a nilpotent factor keeps the Sylow numbers of the nonabelian factor but changes the denominator attached to any shared prime; at the same time, every new normal Sylow subgroup contributes a positive reciprocal.  For \(A_5\), the central factor \(\C_2\) creates the exact deficit
\[
        1-\frac59=\frac49,
\]
and the prime set \(\{7,11,13,17,19,29,71,83\}\) supplies the exact reciprocal sum
\[
        \frac18+\frac1{12}+\frac1{14}+\frac1{18}+\frac1{20}+\frac1{30}+\frac1{72}+\frac1{84}=\frac49.
\]
The group
\[
        A_5\times\C_2\times\C_7\times\C_{11}\times\C_{13}\times\C_{17}\times\C_{19}\times\C_{29}\times\C_{71}\times\C_{83}
\]
therefore has \(\gamma=9/2\) while being nonsolvable and not isomorphic to \(A_5\).  The stronger compensation theorem, Theorem \ref{thm:A5-compensation}, gives an exact criterion for all direct products \(A_5\times N\) with \(N\) nilpotent and shows that the counterexample is a manifestation of a general balancing equation.

\bmhead{Statements and Declarations}

\noindent\textbf{Funding.} No funding was received for this work / insert funding information here.

\noindent\textbf{Competing interests.} The authors declare that they have no competing interests.

\noindent\textbf{Data availability.} No datasets were generated or analysed during the current study.

\noindent\textbf{Author contributions.}
Y.Z. conceived the main problem, developed the principal arguments, and drafted the manuscript. Y.Y. contributed to the verification and refinement of the proofs, improved the exposition, and assisted with the preparation of the final version. Both authors discussed the results, reviewed the manuscript, and approved the final version for submission.

\section*{Declaration of Generative AI and AI-Assisted Technologies in the Writing Process}
During the preparation of this work, the authors used DeepSeek to build a specialized agent for solving mathematical problems, which was employed to generate an initial proof of the main theorem. After using this tool, the authors reviewed and edited the content as needed and take full responsibility for the content of the published article.

\end{document}